\newcommand\blfootnote[1]{%
  \begingroup
  \renewcommand\thefootnote{}\footnote{#1}%
  \addtocounter{footnote}{-1}%
  \endgroup
}
\DeclareMathOperator{\diam}{diam}
\theoremstyle{plain}                   
\newtheorem{theorem}{Theorem}
\newtheorem{ex}[theorem]{Example}
\theoremstyle{remark}
\newtheorem*{rem}{Remark}
\newcommand{\R}{\mathbb{R}}
\begin{document}
\title {Geometric properties of extremal sets for the dimension comparison in Carnot groups of step 2}
\author{Laura Venieri}

\maketitle 

\begin{abstract}
In Carnot groups of step 2 we consider sets having maximal or minimal possible homogeneous Hausdorff dimension compared to their Euclidean one: in the first case we prove that they must be in a sense vertical, that is a large part of these sets can lie off horizontal planes through their points, and in the latter case close to horizontal. The examples that had been used to prove the sharpness of the sub-Riemannian versus Euclidean dimension comparison intuitively satisfied these geometric properties, which we here quantify and prove to be necessary. We also show the sharpness of our statements with some examples. This paper generalizes the analogue results proved by Mattila and the author in the first Heisenberg group. 
\end{abstract}

\blfootnote{\textit{Key words.} Hausdorff measure, Carnot groups, Hausdorff dimension, dimension comparison.\\
\textit{Mathematics Subject Classification}. 28A75.\\
The author was supported by the Academy of Finland through the Finnish Center of Excellence in Analysis and Dynamics research and through the project \emph{Quantitative rectifiability in Euclidean and non-Euclidean spaces}, grant no. $314172$.}

\section{Introduction}

Carnot groups are simply connected nilpotent Lie groups whose Lie algebra admits certain gradings, called stratifications. The first layer of the stratification is called horizontal and plays a special role since it generates all the Lie algebra by commutations. Using the stratification, one can define a metric, called Carnot-Carath\'eodory metric, which is of sub-Riemannian type, and a natural family of dilations. The Carnot-Carath\'eodory metric is bi-Lipschitz equivalent to any metric that is left-invariant with respect to the group operation and one-homogeneous with respect to the dilations. We call such a metric homogeneous.

Carnot groups appear in several areas of mathematics, such as harmonic analysis, where they play a key role in the study of hypoelliptic differential operators (see e.g. \cite{RS} and \cite{S}).
Carnot groups are metric tangents of sub-Riemannian manifolds, which are manifolds endowed with a linear subbundle of the tangent bundle and an inner product on it, and have for example applications in physics (see \cite{Mo}) and in modelling the visual cortex (see \cite{CS}). 

Any Carnot group $\mathbb{G}$ is homeomorphic to $\R^n$ for a certain $n$, which means that topologically they are the same. The Euclidean metric, however, is not bi-Lipschitz equivalent to any homogeneous metric, thus, for instance, the Hausdorff dimension of a subset of $\mathbb{G}$ in the homogeneous and in the Euclidean metric can differ. We denote by $\dim_\mathbb{G}$ and $\dim_E$ the Hausdorff dimension in the homogenous and Euclidean sense respectively. In \cite{BTW} (Theorem 2.4) Balogh, Tyson and Warhurst solved the dimension comparison problem, determining explicit functions $\beta_{\pm}$ such that for every $A \subset \mathbb{G}$,
\begin{displaymath}
\beta_-(\dim_E A) \le \dim_\mathbb{G}A \le \beta_+(\dim_E A).
\end{displaymath}
They also provide examples to show the sharpness of these inequalities, developing a theory of fractal geometry in Carnot groups. Intuitively sets with $\dim_\mathbb{G}A =\beta_+(\dim_E A)$ are as vertical as possible, that is they lie in the direction of the higher strata of the Lie algebra. Sets with $\dim_\mathbb{G}A =\beta_-(\dim_E A)$, instead, lie in the direction of the lower strata, that is they are as horizontal as possible. The tecnique used for constructing examples in the latter case worked only for a dense set of dimensions and only for those the examples have positive and finite Hausdorff measures at the respective dimensions. Later, Rajala and Vilppolainen \cite{RV} constructed examples of such sets having positive and finite measures for any dimension, replacing self-similar constructions with so-called controlled Moran constructions.
 
This paper is a sequel of \cite{MV}, where Mattila and the author gave a characterization of extremal sets for the dimension comparison in the first Heisenberg group $\mathbb{H}^1$. We now prove analogue results in Carnot groups of step $2$, which have a stratification consisting of two layers. In this case the functions $\beta_{\pm}$ have the form 
\begin{equation}\label{beta}
\beta_-(s)=\max\{s,2s-m_1\}, \quad  \beta_+(s)=\min\{2s,s+m_2\},
\end{equation}
where $m_1$ is the dimension of the first layer of the group (called horizontal) and $m_2$ of the second (see Section \ref{step2prel} for more details). The Heisenberg group $\mathbb{H}^1$, identified with $\R^3$, is the simplest example of a Carnot group of step $2$, where $m_1=2$ and $m_2=1$.

Theorem \ref{thm1} states that when $0<s\le m_1$, which implies $\beta_-(s)=s$, sets having positive and finite Euclidean and homogeneous $s$-Hausdorff measures lie close to horizontal $m_1$-planes in some arbitrarily small neighbourhoods of almost every point. Horizontal $m_1$-planes, which are obtained by left-translating the horizontal layer, are defined in \eqref{horplane}. We show in Examples \ref{ex1s1}, \ref{ex2s1} that this result is sharp at least when $s=1$ in the sense that we cannot take all small neighbourhoods. Examples \ref{ex1} and \ref{ex2} show the sharpness in the case of Heisenberg groups $\mathbb{H}^m$, identified with $\R^{2m+1}$, when $s$ is half the dimension of the first layer $m_1=2m$. Example \ref{ex3}, which relies on the construction in \cite{RV}, shows that when $s \ge m_1$ the same result does not hold since a large portion of the set can lie off horizontal planes through its points. In Theorems \ref{sgreat} and \ref{ssmall1} we consider the case of sets with positive and finite $s$-dimensional Euclidean Hausdorff measure and $\beta_+(s)$-dimensional homogeneous Hausdorff measure: they cannot lie too close to horizontal planes in small neighbourhoods of their typical points. Even if the proofs of the theorems are an adaptation of those in \cite{MV}, the results are of independent interest and we explain in Section \ref{stepk} that the result corresponding to Theorem \ref{ssmall1} holds also in general Carnot groups.

\subsection{Acknowledgements}
I would like to thank Pertti Mattila for many fruitful discussions about the topic. 

\section{Preliminaries}\label{prel}
\subsection{Carnot groups of step $2$}\label{step2prel}
A Carnot group of step $2$ is a connected, simply connected Lie group whose Lie algebra $\mathfrak{g}$ admits a stratification of step $2$, that is there exist linear subspaces $V_1, V_2$ such that
\begin{displaymath}
\mathfrak{g}=V_1 \oplus V_2, \quad [V_1,V_1]=V_2, \ [V_1,V_2]=\{0\}, 
\end{displaymath}
where $[V_1,V_i]$ is spanned by the commutators $[X,Y]$ with $X \in V_1$, $Y \in V_i$. Let $m_1$, $m_2$ be the dimensions of $V_1$, $V_2$ respectively.

Using exponential coordinates, we can identify $\mathbb{G}$ with $\R^n=\R^{m_1}\times\R^{m_2}$ and denote points by $p=[p^1,p^2]=(x_1, \dots, x_{m_1}, x_{m_1+1}, \dots, x_n)$, where $p^i \in \R^{m_i}$, $i=1,2$, and $x_j \in \R$, $j=1, \dots,n$. The group operation has the form
\begin{equation*}
[p^1,p^2] \cdot [q^1,q^2]= [p^1+q^1, p^2+q^2+P(p,q)],
\end{equation*}
where $P=(P_{m_1+1}, \dots, P_n)$ and each $P_j$ is a polynomial of the form 
\begin{equation}\label{Pj}
P_j(p,q)=P_j(p^1,q^1)=\sum_{1\le l<i\le m_1} b^j_{l,i} (x_l y_i - x_i y_l),
\end{equation}
for some $b^j_{l,i} \in \R$ (see Lemma 1.7.2 in \cite{Mon}).
There is a natural family of dilations on $\mathbb{G}$ given by
\begin{displaymath}
\delta_\lambda(p)=[\lambda p^1, \lambda^2 p^2], \quad \lambda>0,
\end{displaymath}
and each polynomial $P_j$ is homogeneous of degree $2$ with respect to $\delta_\lambda$, e.g $P_j(\delta_\lambda(p), \delta_\lambda(q))=\lambda^2 P_j(p,q)$.
The simplest examples of Carnot groups of step $2$ are the Heisenberg groups $\mathbb{H}^m$, $m\ge 1$, identified with $\R^{2m+1}=\R^n$, for which $m_1=2m$ and $m_2=1$. Denoting the points by $p=(x,y,t)$ with $x,y \in \R^m$, $t \in \R$, the group operation takes the form
\begin{equation}\label{operHn}
(x,y,t) \cdot (x',y',t')=(x+x',y+y', t+t' +2(\langle y,x' \rangle-\langle x,y' \rangle)),
\end{equation}
where $\langle \cdot, \cdot \rangle$ denotes the scalar product.

On Carnot groups one can define the Carnot-Carath\'eodory metric using horizontal curves, which are tangent at almost every point to the vector fields generating $V_1$ (see e.g. Section 2.2 in \cite{BTW}). Here we will work instead with the metric
\begin{equation}\label{dinfty}
d_\infty(p,q)=\max \{|p^1-q^1|_{\R^{m_1}}, c |p^2-q^2+P(p^1,q^1)|_{\R^{m_2}}^{1/2} \},  
\end{equation}
where $c \in (0,1)$ is a suitable constant depending only on the group structure (see Theorem 5.1 in \cite{FSSC}). Here and in the following we will use $| \cdot |$ to denote the Euclidean norm and drop the subscripts $\R^{m_i}$. Like the Carnot-Carath\'eodory metric, $d_\infty$ is left-invariant with respect to the group operation and one-homogeneous with respect to the dilations, i.e.
\begin{displaymath}
d_\infty(p \cdot q, p \cdot q')=d_\infty(q,q'), \quad d_\infty(\delta_\lambda(p), \delta_\lambda(q))=\lambda d_\infty(p,q)
\end{displaymath}
for every $p,q,q' \in \mathbb{G}$ and $\lambda>0$. We call such metrics homogeneous and by Proposition 5.1.4 in \cite{BLU} they are all bi-Lipschitz equivalent. We denote by $B_E(p,r)$ and by $B_\infty(p,r)$ the closed ball of center $p$ and radius $r$ in the Euclidean metric $d_E$ and in the homogeneous metric $d_\infty$ respectively.
Proposition 5.15.1 in \cite{BLU} states that for every $0<R<\infty$ there exists $c_R>0$ such that for every $p,q \in B_E(0,R)$ we have
\begin{equation}\label{compd}
\frac{1}{c_R} d_E(p,q) \le d_\infty(p,q) \le c_R d_E(p,q)^{1/2}.
\end{equation}

The horizontal $m_1$-plane $V(p)$ passing through a point $p \in \mathbb{G}$ is the left translation by $p$ of $\{[q^1,0]: q^1 \in \mathbb{R}^{m_1}\}$. It is the set of points $q=[q^1,q^2] \in \mathbb{G}$ such that
\begin{equation}\label{horplane}
q^2=p^2+P(p^1,q^1).
\end{equation}
Note that if $q \in V(p)$ then 
\begin{equation}\label{dVp}
d_\infty(p,q)= |p^1-q^1| \le d_E(p,q)
\end{equation} 
We will denote by $V(p)(\delta)$ the $\delta$-neighbourhood of $V(p)$ in the Euclidean metric, that is the set of points $q \in \mathbb{G}$ whose Euclidean distance from $V(p)$ is $\le \delta$.
Note that if $p\in B_E(0,R)$ and $0<r<1$, then
\begin{equation}\label{Vpneighb}
V(p) \left(\frac{r^2}{c_R^2}\right) \cap B_E(p,r)  \subset B_\infty(p,r) \subset V(p)\left(\frac{r^2}{c^2}\right) \cap B_E(p, c_R r).
\end{equation}
To see that the right-hand side inclusion holds, let $q=[q^1,q^2] \in B_\infty(p,r)$. Then $d_E(p,q)\le c_R d_\infty(p,q) \le c_R r$ and 
\begin{align}\label{distVp}
d_E(q, V(p))&= \inf_{s \in V(p)} d_E(q,s)= \nonumber \\
&=\min_{s^1 \in \R^{m_1}} (|q^1-s^1|^2+|q^2-p^2-P(p^1,s^1)|^2)^{1/2},
\end{align}
hence taking $s^1=q^1$,
\begin{align}\label{dEqVp}
d_E(q, V(p))& \le |q^2-p^2-P(p^1,q^1)| \le \frac{d_\infty(p,q)^2}{c^2} \le \frac{r^2}{c^2},
\end{align}
which proves the right-hand side inclusion. For the other inclusion, let $q=[q^1,q^2] \in V(p) (r^2/c_R^2) \cap B_E(p,r)$ and let $\bar{q}=[p^1, q^2-P(p^1,q^1)]$. Let $s=[s^1, p^2+P(p^1,s^1)] \in V(p)$ be the point such that $d_E(q,V(p))=d_E(q,s) \le r^2/c_R^2$ and let $\bar{s}=[s^1+p^1-q^1, p^2+P(p^1,s^1-q^1)] \in V(p)$. We have
\begin{align}\label{dEqs}
d_E(\bar{q},\bar{s} )= \left(|s^1-q^1|^2+ |q^2-P(p^1,q^1) -p^2-P(p^1,s^1-q^1)|^2 \right)^{1/2}=d_E(q,s) \le \frac{r^2}{c_R^2},
\end{align}
which implies that
\begin{align*}
d_\infty(\bar{q},\bar{s} ) \le c_R d_E(\bar{q},\bar{s} )^{1/2} \le r.
\end{align*}
On the other hand,
\begin{align*}
d_\infty(\bar{q},\bar{s} )&=\max \{ |p^1-s^1-p^1+q^1|, c | q^2-P(p^1,q^1)-p^2-P(p^1,s^1-q^1)+P(p^1,s^1+p^1-q^1)|^{1/2} \}\\
&= \max \{ |q^1-s^1|,  c| q^2-p^2-P(p^1,q^1)|^{1/2} \},
\end{align*}
hence
\begin{displaymath}
c| q^2-p^2-P(p^1,q^1)|^{1/2} \le r.
\end{displaymath}
This and the fact that $q \in B_E(p,r)$ imply that
\begin{equation*}
d_\infty(p,q)= \max \{ |p^1-q^1|, c |p^2-q^2+P(p^1,q^1)|^{1/2} \} \le r,
\end{equation*}
which completes the proof of \eqref{Vpneighb}.

\subsection{Hausdorff measures and dimensions}
Given a metric space $(X,d)$ and a real number $0<s<\infty$, the $s$-dimensional Hausdorff measure of a set $A \subset X$ is defined as
\begin{displaymath}
\mathcal{H}^s(A)=\lim_{\delta\rightarrow0} \mathcal{H}^s_\delta(A),
\end{displaymath}
where 
\begin{displaymath}
\mathcal{H}^s_\delta(A)= \inf \left\{ \sum_{j=1}^\infty \mbox{diam}_d(A_j)^s: A \subset \bigcup_{j=1}^\infty A_j, \mbox{diam}_d(A_j)< \delta \right\}
\end{displaymath}
and $\mbox{diam}_d$ denotes the diameter with respect to the metric $d$. 
The Hausdorff dimension of $A$ with respect to $d$ is then defined by
\begin{displaymath}
\dim_d(A)=\sup \{s: \mathcal{H}^s(A)>0\}.
\end{displaymath}
We will denote by $\mathcal{H}^s_E$ the $s$-dimensional Hausdorff measure with respect to the Euclidean metric, by $\mathcal{H}^s_\mathbb{G}$ the $s$-dimensional Hausdorff measure with respect to the metric $d_\infty$ and by $\dim_E$ and $\dim_\mathbb{G}$ the corresponding Hausdorff dimensions. Note that the Hausdorff dimension of a set with respect to any homogeneous metric is the same since the metrics are bi-Lipschitz equivalent.

We recall that the upper density theorem for Hausdorff measures, see for example 2.10.19 in \cite{F}, states that for every $\mathcal{H}^s $ measurable set $A \subset X$ with $\mathcal{H}^s(A)>0$, for $\mathcal{H}^s$ almost every $p \in A$,
\begin{equation}\label{upd}
2^{-s} \le \limsup_{r \rightarrow 0} \frac{\mathcal{H}^s(A \cap B_d(p,r))}{(2r)^s} \le 1
\end{equation}
and for $\mathcal{H}^s$ almost every $p \in X \setminus A$,
\begin{equation}\label{dz}
\lim_{r \rightarrow 0}\frac{\mathcal{H}^s(A \cap B_d(p,r))}{(2r)^s}=0.
\end{equation}

As explained in the introduction, Balogh, Tyson and Warhurst solved the dimension comparison problem in Carnot groups, proving that for every $A \subset \mathbb{G}$,
\begin{displaymath}
\beta_-(\dim_E A) \le \dim_\mathbb{G} A \le\beta_+( \dim_E A),
\end{displaymath}
where the functions $\beta_\pm$ were defined in \eqref{beta} for a Carnot group of step $2$. This result follows from Proposition 3.1 in the same paper \cite{BTW} stating that for any $0<R<\infty$ there exists $C_R$ such that for any $A \subset B_E(0,R)$ and $0\le s\le n$,
\begin{equation}\label{Hm}
\frac{\mathcal{H}^{\beta_+(s)}_\mathbb{G}(A) }{C_R}\le \mathcal{H}^s_E(A) \le C_R  \mathcal{H}^{\beta_-(s)}_\mathbb{G}(A).
\end{equation}

\section{Results}

We will now state and prove the theorems, whose content is a generalization to Carnot groups of step $2$ of those proved in \cite{MV} for the first Heisenberg group $\mathbb{H}^1$.

\begin{theorem}\label{thm1}
Let $0< s \le m_1$ and let $A \subset \mathbb{G}$ be such that $\mathcal{H}^s_\mathbb{G}(A) < \infty$. Then  for $\mathcal{H}^s_E$ almost every $p \in A$ there exists $0<\epsilon<1$ such that
\begin{equation*}
\liminf_{r \rightarrow 0} \frac{\mathcal{H}^s_E(A \cap B_E(p,r) \setminus V(p)(r^{1+\epsilon}))}{r^s}=0.
\end{equation*}
\end{theorem}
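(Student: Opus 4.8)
The plan is to argue by contradiction, and to turn the Euclidean statement into a homogeneous one via \eqref{Hm}, \eqref{Vpneighb} and \eqref{compd}. Negating the conclusion and applying a pigeonhole over rational parameters, one obtains $\epsilon_0\in(0,1)$, $\delta>0$, $\rho_0>0$ and a set $B\subset A$ with $\mathcal{H}^s_E(B)>0$ such that
\[
\mathcal{H}^s_E\bigl(A\cap B_E(p,r)\setminus V(p)(r^{1+\epsilon_0})\bigr)\ge\delta r^s
\qquad\text{for every }p\in B,\ 0<r<\rho_0 ,
\]
and the aim is to contradict $\mathcal{H}^s_\mathbb{G}(A)<\infty$. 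Before that I would record the reductions: since the statement is local, assume $A\subset B_E(0,R)$; by \eqref{Hm} with $\beta_-(s)=s$ (valid as $s\le m_1$) we get $\mathcal{H}^s_E(A)\le C_R\mathcal{H}^s_\mathbb{G}(A)<\infty$, and the same inequality applied to subsets shows that $\mathcal{H}^s_\mathbb{G}$–null subsets of $B_E(0,R)$ are $\mathcal{H}^s_E$–null, so any property holding $\mathcal{H}^s_\mathbb{G}$–a.e.\ holds $\mathcal{H}^s_E$–a.e.\ and $\mathcal{H}^s_\mathbb{G}(B)>0$. Applying the upper density theorem \eqref{upd} for $\mathcal{H}^s_\mathbb{G}$ (on a measurable hull of $A$) together with a further pigeonhole, I may shrink $B$ and $\rho_0$ so that in addition $\mathcal{H}^s_\mathbb{G}(A\cap B_\infty(q,\rho))\le 2^{s+1}\rho^s$ for all $q\in A$ and $0<\rho<\rho_0$, and $\mathcal{H}^s_E(A\cap B_E(p,\rho))\le 2^{s+1}\rho^s$ for all $p\in B$ and $0<\rho<\rho_0$.

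The geometric translation is where \eqref{distVp}--\eqref{dEqVp} and the bilinearity \eqref{Pj} of $P$ come in. Fix $p\in B$ and small $r$ and put $v(q)=|q^2-p^2-P(p^1,q^1)|$. For $q\in B_E(p,r)$, taking $s^1=q^1$ in \eqref{distVp} gives $d_E(q,V(p))\le v(q)$, and the same argument (bounding $|P(p^1,q^1)-P(p^1,s^1)|\le C_R|q^1-s^1|$ via \eqref{Pj}) gives $v(q)\le C_R\,d_E(q,V(p))$; also $|P(p^1,q^1)|=|P(p^1,q^1-p^1)|\le C_R r$, so $v(q)\le C_R r$. Hence if in addition $q\notin V(p)(r^{1+\epsilon_0})$ then $v(q)>r^{1+\epsilon_0}$, so $c\,v(q)^{1/2}>c\,r^{(1+\epsilon_0)/2}>r\ge|p^1-q^1|$ for small $r$, and therefore $d_\infty(p,q)=c\,v(q)^{1/2}$ by \eqref{dinfty}. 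Consequently $A\cap B_E(p,r)\setminus V(p)(r^{1+\epsilon_0})$ lies in the homogeneous annulus $B_\infty(p,c_R r^{1/2})\setminus B_\infty(p,c\,r^{(1+\epsilon_0)/2})$, and it splits into $J\asymp\epsilon_0\log(1/r)$ pieces $E_r^{(j)}=\{q\in A\cap B_E(p,r):2^{-j-1}C_R r<v(q)\le 2^{-j}C_R r\}$, each of which is squeezed simultaneously into a Euclidean ball of radius $r$ and (by the above, using the triangle inequality through $p$) into a single homogeneous ball centred at $p$ of radius $\asymp(2^{-j}r)^{1/2}$.

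It then remains, using \eqref{Hm} once more, to bound $\sum_j\mathcal{H}^s_\mathbb{G}(E_r^{(j)})$ and reach a contradiction with the displayed lower bound. The crude estimate $\mathcal{H}^s_\mathbb{G}(E_r^{(j)})\le 2^{s+1}(2^{-j}C_R r)^{s/2}$ sums only to $O(r^{s/2})$, which is hopelessly larger than $\delta r^s$; the improvement comes from exploiting that $E_r^{(j)}$, being confined to $B_E(p,r)$, is a ``tilted slab'' of horizontal size $\asymp r$ and vertical size $\asymp 2^{-j}r$, so it can be covered much more economically by homogeneous balls of radius $\asymp 2^{-j}r$ adapted to the slab — and it is exactly here that $\mathcal{H}^s_\mathbb{G}(A)<\infty$, in the form of the uniform density bound above combined with a $5r$–covering argument with centres on $A$, limits how many such balls can meet $A$. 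Summing the slab contributions over $j$ and then choosing the Euclidean scale $r$ from a sequence $r_\ell\to0$ along which the resulting total is $o(r_\ell^s)$ closes the contradiction at $r=r_\ell$. I expect this last multiscale bookkeeping — reconciling the $\asymp\log(1/r)$ slab contributions, the single fixed exponent $\epsilon_0$, and the fact that $A$ need not be Ahlfors regular — to be the main obstacle; it is a direct adaptation of the corresponding argument in \cite{MV}, and the need to work along a subsequence of radii and with an unquantified $\epsilon$ (rather than obtaining a genuine limit valid for all $\epsilon$) is precisely what the sharpness examples of the introduction reflect.
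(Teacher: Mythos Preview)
Your reductions and the geometric translation via $v(q)$ are correct, but the heart of the argument is missing, and the direction you sketch does not close. You are trying to bound $\mathcal{H}^s_\mathbb{G}\bigl(A\cap B_E(p,r)\setminus V(p)(r^{1+\epsilon_0})\bigr)$ from above by slicing into dyadic slabs $E_r^{(j)}$. But each $E_r^{(j)}$ sits inside a single $d_\infty$-ball of radius $\asymp(2^{-j}r)^{1/2}$, and the only uniform control you have on $\mathcal{H}^s_\mathbb{G}$ inside such a ball is the upper density bound $\lesssim(2^{-j}r)^{s/2}$; no finer covering of the slab by smaller homogeneous balls can beat this, because a $5r$-covering with centres on $A$ gives no \emph{upper} bound on the number of balls without a lower density assumption, which you do not have. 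Summing over $j$ therefore always yields $\lesssim r^{s/2}$, and since $s/2<s$ this is far too large to contradict the lower bound $\delta r^s$; there is no subsequence $r_\ell\to0$ that rescues this. The appeal to \cite{MV} is misplaced: the argument there is not a single-centre slab decomposition.

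The paper's (and \cite{MV}'s) mechanism runs in the opposite direction: it uses the counterassumption at \emph{many} centres and \emph{many} scales to produce too much Euclidean mass in one ball. Fix $p\in A'$, choose $k\approx\log 2/\log(1+\epsilon)$ so that $r^{(1+\epsilon)^{k+1}}\le r^2<r^{(1+\epsilon)^k}$, and for each $j=1,\dots,k$ use a $5r$-covering of $A'\cap B_\infty(p,r)$ by Euclidean balls $B_E(p_{j,i},5r^{(1+\epsilon)^j})$ with $p_{j,i}\in A'\cap B_\infty(p,r)$; the density bounds force at least $M_j\gtrsim r^{s(1-(1+\epsilon)^j)}$ such centres. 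At each centre the counterassumption gives $\mathcal{H}^s_E$-mass $\gtrsim r^{s(1+\epsilon)^j}$ in $B_E(p_{j,i},r^{(1+\epsilon)^j})\setminus V(p_{j,i})(Kr^{(1+\epsilon)^{j+1}})$. The key geometric step --- and this is where the structure of $V(p)$ actually enters --- is that these off-plane pieces are pairwise disjoint across different $j$: if $B_E(p_{m,l},r^{(1+\epsilon)^m})$ with $m>j$ meets $B_E(p_{j,i},r^{(1+\epsilon)^j})$, then since $d_\infty(p_{m,l},p_{j,i})\le 2r$ one has $d_E(p_{m,l},V(p_{j,i}))\lesssim r^2<r^{(1+\epsilon)^{j+1}}$, so the smaller ball lies entirely inside $V(p_{j,i})(Kr^{(1+\epsilon)^{j+1}})$. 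Summing the disjoint contributions gives $\mathcal{H}^s_E(A\cap B_E(p,3r))\gtrsim\sum_{j=1}^k M_j\,r^{s(1+\epsilon)^j}\gtrsim k\,r^s$, which for $\epsilon$ small (hence $k$ large) contradicts the upper density bound $\mathcal{H}^s_E(A\cap B_E(p,3r))\le 3^s(3r)^s$.
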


\begin{proof}
Since the proof proceeds essentially as that of Theorem 2 in \cite{MV}, we recall here the main steps, proving in detail only the parts that need to be changed.
We can assume that $A$ is a Borel set and that $A \subset B_E(0,R)$ for some $R>0$. Using \eqref{Hm} we can reduce to the case when there is a constant $C>0$ such that
\begin{equation}\label{red}
\frac{1}{C}\mathcal{H}^s_E(B) \le \mathcal{H}^s_\mathbb{G}(B) \le C \mathcal{H}^s_E(B)
\end{equation}
for every $B  \subset A$.
The proof proceeds by contradiction, assuming that for $\epsilon>0$ there exist $c'>0$, $0<r_0<1$ and $A'\subset A$ with $\mathcal{H}^s_E(A')>0$, such that
\begin{equation}\label{countera}
\mathcal{H}^s_E(A \cap B_E(p,r) \setminus V(p)(Kr^{1+\epsilon})) > c'r^s
\end{equation}
for every $p \in A'$ and $0<r<r_0$, where $K=4/c^2+1$. We will get a contradiction at the end of the proof letting $\epsilon$ be small enough depending only on $s$, $R$ and $C$. We can also assume using \eqref{upd} that
\begin{equation}\label{a}
\mathcal{H}^s_E(A \cap B_E(p,r)) \le 3^s r^s \quad \mbox{and} \quad \mathcal{H}^s_\mathbb{G}(A' \cap B_H(p,r)) > r^s/(2C)
\end{equation}
for $r$ small enough so that $r^2 << r^{1+\epsilon}$. Let $k \in \mathbb{N}$ be such that $r^{(1+\epsilon)^{k+1}} \le r^2 < r^{(1+\epsilon)^k}$.
Using the $5r$-covering theorem and \eqref{a}, for $j=1, \dots, k$ we can find $p_{j,i} \in A' \cap B_\infty(p,r)$ such that
\begin{equation*}
A'  \cap B_\infty(p,r)= \bigcup_{i=1}^{M_j} A'  \cap B_\infty(p,r) \cap B_E(p_{j,i} ,5  r^{(1+ \epsilon)^j}),
\end{equation*}
where the balls $B_E(p_{j,i}, r^{(1+\epsilon)^j})$, $i=1, \dots, M_j$, are disjoint and $M_j \ge r^{s(1-(1+\epsilon)^j)}/(15^s2C)$ (here we use \eqref{a}).

Now we show that the sets
\begin{equation}\label{disj}
\bigcup_{i=1}^{m_j} B_E(p_{j,i},  r^{(1+\epsilon)^j}) \setminus V(p_{j,i})(K r^{(1+\epsilon)^{j+1}}), \ j=1, \dots,k,
\end{equation}
are disjoint. We show the details since they are a bit different from the case of $\mathbb{H}^1$.
Let $j \in \{1, \dots, k-1\}$ and let $B_E(p_{j,i}, r^{(1+\epsilon)^j})$ and $B_E(p_{m,l}, r^{(1+\epsilon)^m})$, $ j +1 \le m \le k$, $i\in \{1, \dots, M_j\}, l \in \{1, \dots, M_m\}$, be such that $B_E(p_{j,i},r^{(1+\epsilon)^j} ) \cap B_E(p_{m,l}, r^{(1+\epsilon)^m}) \neq \emptyset$. We want to show that
\begin{equation}\label{intB}
( B_E(p_{j,i}, r^{(1+\epsilon)^j}) \setminus V(p_{j,i})(K r^{(1+\epsilon)^{j+1}})) \cap B_E(p_{m,l}, r^{(1+\epsilon)^m})= \emptyset.
\end{equation}
By the same calculation used to obtain \eqref{dEqVp} we have
\begin{align*}
d_E(p_{m,l}, V(p_{j,i}))  \le \frac{d_\infty(p_{m,l},p_{j,i})^2}{c^2}\le \left( \frac{d_\infty(p_{m,l},p)+d_\infty(p,p_{j,i})}{c}\right)^2\le \frac{4r^2}{c^2},
\end{align*}
which implies that for every $q \in B_E(p_{m,l}, r^{(1+\epsilon)^m})$
\begin{align*}
d_E(q,V(p_{j,i}))\le \frac{4r^2}{c^2}+r^{(1+\epsilon)^m} \le K r^{(1+\epsilon)^{j+1}}
\end{align*}
since $r^2 < r^{(1+\epsilon)^{k}}\le r^{(1+\epsilon)^{j+1}}$. Thus \eqref{intB} holds. Then, as in the case of $\mathbb{H}^1$, we get by \eqref{countera}
\begin{align*}
\mathcal{H}^s_E(A \cap B_E(p,3r))& \ge \sum_{j=1}^k \sum_{i=1}^{M_j} \mathcal{H}^s_E(A \cap B_E(p_{j,i},  r^{(1+\epsilon)^j}) \setminus V(p_{j,i})(K r^{(1+\epsilon)^{j+1}}))\\
& \ge c' \sum_{j=1}^k M_j r^{s(1+\epsilon)^j} \ge \frac{1}{15^s2C} \sum_{j=1}^k r^{s(1-(1+\epsilon)^j)} r^{s(1+\epsilon)^j} \ge \frac{k}{15^s2C}  r^s.
\end{align*}
Since $k \ge \log2/(2\log(1+\epsilon))$, this is a contradiction with \eqref{upd} when $\epsilon$ is small enough.
\end{proof}

\begin{theorem}\label{sgreat}
Let $s \ge m_2$ and $A \subset \mathbb{G}$ be such that $\mathcal{H}^{s}_E(A) <\infty$. Then for $\mathcal{H}^{s+m_2}_\mathbb{G}$ almost every $p \in A$ there exists $\delta >0$ such that
\begin{equation}\label{claim2}
\limsup_{r \rightarrow 0} \frac{\mathcal{H}^s_E (A \cap B_E(p,r) \setminus V(p)(\delta r))}{(2r)^s} > \frac{1}{2^{s+1}}.
\end{equation}
\end{theorem}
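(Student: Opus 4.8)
The plan is to argue by contradiction and to exploit the slab geometry of neighbourhoods of horizontal planes in order to get an upper bound for the $\mathcal{H}^{s+m_2}_\mathbb{G}$-measure of the exceptional set that is incompatible with its positivity; this follows the scheme of the analogous statement in \cite{MV}. First I would make the standard reductions: assume $A$ is a bounded Borel set contained in $B_E(0,R)$ with $0<\mathcal{H}^s_E(A)<\infty$, and, since $s\ge m_2$ gives $\beta_+(s)=s+m_2$, use \eqref{Hm} together with a decomposition of $A$ into countably many pieces (as in the proof of Theorem \ref{thm1}) to reduce to the case where there is $C\ge 1$ with $C^{-1}\mathcal{H}^s_E(B)\le \mathcal{H}^{s+m_2}_\mathbb{G}(B)\le C\,\mathcal{H}^s_E(B)$ for every $B\subset A$. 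Suppose the conclusion fails. Then the set $A'$ of those $p\in A$ for which $\limsup_{r\to0}\mathcal{H}^s_E(A\cap B_E(p,r)\setminus V(p)(\delta r))/(2r)^s\le 2^{-s-1}$ holds for \emph{every} $\delta>0$ has $\mathcal{H}^{s+m_2}_\mathbb{G}(A')>0$. Fix $\lambda\in(1/2,1)$. Since $2^{-s-1}(2r)^s=r^s/2<\lambda r^s$, for every $\delta_0>0$ each $p\in A'$ admits $r_0(p)>0$ with $\mathcal{H}^s_E(A\cap B_E(p,r)\setminus V(p)(\delta_0 r))\le\lambda r^s$ for $0<r<r_0(p)$; splitting $A'$ according to the value of $r_0(p)$ we fix $A''\subset A'$ with $\mathcal{H}^{s+m_2}_\mathbb{G}(A'')>0$ on which this holds with a common $r_0$.

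The geometric core is a covering estimate for thin slabs. Since each polynomial $P_j(x^1,\cdot)$ is linear, $V(x)$ is an affine $m_1$-plane, and by the computation leading to \eqref{dEqVp} together with the bound on the slope of $V(x)$, for $x\in B_E(0,R)$ and $d$ small enough the slab $B_E(x,d)\cap V(x)(\delta_0 d)$ lies in $\{q:|q^1-x^1|\le d,\ |q^2-x^2-P(x^1,q^1)|\le C_R'\delta_0 d\}$. The change of variable $u:=q^2-P(x^1,q^1)$ rectifies this slab and, simultaneously, all the balls $B_\infty([x^1,y^2],d)$ (which are defined by $|q^1-x^1|\le d$ and $|u-y^2|\le d^2/c^2$); covering $B(x^2,C_R'\delta_0 d)\subset\R^{m_2}$ by $\lesssim(\delta_0/d)^{m_2}$ Euclidean balls of radius $d^2/c^2$ then shows that $B_E(x,d)\cap V(x)(\delta_0 d)$ is covered by at most $C_3(\delta_0/d)^{m_2}$ closed $d_\infty$-balls of radius $d$, with $C_3$ depending only on $R$, $m_2$ and $\mathbb{G}$. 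Such a cover contributes at most $C_3(\delta_0/d)^{m_2}(2d)^{s+m_2}=C_4\,\delta_0^{m_2}d^s$ to $\mathcal{H}^{s+m_2}_\mathbb{G}$.

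Now I would run an iterated ``horizontal part plus small remainder'' decomposition of $A''$. Put $A_0=A''$; given $A_k\subset A''$ with $A_k\subset A_{k-1}$, cover $A_k$ by Euclidean sets $U$ of diameter $d_U<\min(r_0,\delta_0,\epsilon_{k+1})$ with $U\cap A_k\ne\emptyset$, choose $x_U\in U\cap A_k$ (so $U\subset B_E(x_U,d_U)$) and arrange $\sum_U d_U^s\le\mathcal{H}^s_E(A_k)+\epsilon_{k+1}$. Split $U\cap A_k$ into $U\cap A_k\cap V(x_U)(\delta_0 d_U)$ and $U\cap A_k\setminus V(x_U)(\delta_0 d_U)$. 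The first piece is covered in the $d_\infty$-metric by the slab estimate, at cost $\le C_4\,\delta_0^{m_2}d_U^s$; for the second piece the hypothesis at scale $d_U<r_0$ (applied at $x_U\in A''$, using $A_k\subset A$) gives $\mathcal{H}^s_E(U\cap A_k\setminus V(x_U)(\delta_0 d_U))\le\lambda d_U^s$. Setting $A_{k+1}:=\bigcup_U(U\cap A_k\setminus V(x_U)(\delta_0 d_U))\subset A_k$, one gets $\mathcal{H}^s_E(A_{k+1})\le\lambda(\mathcal{H}^s_E(A_k)+\epsilon_{k+1})$, hence $\mathcal{H}^s_E(A_k)\to0$ (with $\sum_k\epsilon_k$ chosen small), so $Z:=\bigcap_k A_k$ has $\mathcal{H}^s_E(Z)=0$ and therefore $\mathcal{H}^{s+m_2}_\mathbb{G}(Z)=0$. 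Since $A''=Z\cup\bigcup_k\bigcup_U(U\cap A_k\cap V(x_U)(\delta_0 d_U))$, summing the costs of the horizontal pieces yields
\begin{equation*}
\mathcal{H}^{s+m_2}_\mathbb{G}(A'')\le C_4\,\delta_0^{m_2}\sum_{k\ge0}\big(\mathcal{H}^s_E(A_k)+\epsilon_{k+1}\big)\le\frac{C_5\,\delta_0^{m_2}}{1-\lambda}\,\mathcal{H}^s_E(A''),
\end{equation*}
and since the bound is uniform in the covering scales, letting $\epsilon_k\to0$ it holds for $\mathcal{H}^{s+m_2}_\mathbb{G}$ itself.

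To finish, combine the last inequality with the comparison $\mathcal{H}^s_E(A'')\le C\,\mathcal{H}^{s+m_2}_\mathbb{G}(A'')$ and with $0<\mathcal{H}^{s+m_2}_\mathbb{G}(A'')<\infty$: we obtain $1\le C\,C_5\,\delta_0^{m_2}/(1-\lambda)$, which fails once $\delta_0$ is chosen small enough, depending only on $C$, $C_5$ and $\lambda$ (hence on $A$, $R$, $s$ and $\mathbb{G}$). This contradiction gives $\mathcal{H}^{s+m_2}_\mathbb{G}(A')=0$, which is the assertion. The step requiring the most care is the slab covering estimate of the second paragraph: one must track how the twisting by $P$ distorts the $d_\infty$-balls relative to the Euclidean slab around $V(x)$, and keep hold of the various small-scale thresholds (in particular $d^2/c^2\ll\delta_0 d$). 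This works out precisely because $V(x)$ is genuinely affine and the single substitution $u=q^2-P(x^1,q^1)$ straightens both objects at once; the measure-theoretic reduction to comparable measures and the bookkeeping of the errors $\epsilon_k$ are routine, exactly as in \cite{MV}.
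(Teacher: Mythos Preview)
Your proof is correct, but it takes a genuinely different route from the paper's. Both arguments share the same geometric core --- the slab covering estimate that $B_E(p,r)\cap V(p)(\delta r)$ can be covered by $\lesssim(\delta/r)^{m_2}$ homogeneous balls of radius $\sim r$ --- but they exploit it differently.

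The paper's proof is \emph{direct} and avoids contradiction entirely. After the same reductions, it invokes the upper density theorem \eqref{upd} for $\mathcal{H}^{s+m_2}_\mathbb{G}$ to assume in addition that $\mathcal{H}^{s+m_2}_\mathbb{G}(A\cap B_\infty(p,r))\le 3^{s+m_2}r^{s+m_2}$ for a.e.\ $p$ and small $r$. Then the slab covering together with the measure comparison \eqref{red2} gives, for a single explicit $\delta$ depending only on $s,R,C$,
\[
\mathcal{H}^s_E(A\cap B_E(p,r)\cap V(p)(\delta r))\le C\sum_{i=1}^k\mathcal{H}^{s+m_2}_\mathbb{G}(A\cap B_\infty(p_i,2r))\lesssim \delta^{m_2}r^s<\tfrac12 r^s,
\]
and the Euclidean upper density theorem immediately yields \eqref{claim2}. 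No iteration, no bookkeeping of pre-measure scales.

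Your approach instead runs an iterated ``slab plus remainder'' decomposition, shrinking the Euclidean measure of the off-slab part geometrically while controlling the slab parts in $\mathcal{H}^{s+m_2}_\mathbb{G}$ via the covering. This is valid but heavier: the sentence ``since the bound is uniform in the covering scales, letting $\epsilon_k\to0$ it holds for $\mathcal{H}^{s+m_2}_\mathbb{G}$ itself'' hides the fact that all the $d_\infty$-ball covers across every step $k$ have diameters $\le 2\epsilon_1$, so one actually bounds $\mathcal{H}^{s+m_2}_{\mathbb{G},2\epsilon_1}(A'')$ and must rerun the whole iteration with $\epsilon_1\to0$ (the sets $A_k$ and $Z$ change each time, but the final estimate does not). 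What you gain is that you never need the homogeneous density bound $\mathcal{H}^{s+m_2}_\mathbb{G}(A\cap B_\infty(p,r))\lesssim r^{s+m_2}$; what the paper gains is a one-line endgame and an explicit $\delta$ uniform over $A$.
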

\begin{proof}
We follow essentially the proof of Theorem 3 in \cite{MV}, recall the main steps and show the details to prove \eqref{Vp} since they are different. We can reduce to the case when $A \in B_E(0,R)$ is a Borel set and there is $C>0$ such that
\begin{equation}\label{red2}
\frac{1}{C}\mathcal{H}^s_E(B) \le \mathcal{H}^{s+m_2}_\mathbb{G}(B) \le C \mathcal{H}^s_E(B)
\end{equation}
for every $B  \subset A$. Moreover, we can assume that there exists $0<r_0<1$ such that for every $0<r<r_0$ and almost every $p \in A$,
\begin{equation}\label{measball}
\mathcal{H}^s_E(A \cap B_E(p,r)) \le 3^sr^s \quad \mbox{and} \quad \mathcal{H}^{s+m_2}_\mathbb{G}(A \cap B_\infty(p,r))\le 3^{s+m_2} r^{s+m_2}.
\end{equation}
More precisely, we would need to split $A$ into a countable union of sets on which $r_0$, which depends on the point $p$, is essentially $2^{-j}$, $j \in \mathbb{N}$, but we skip these details. Let 
\begin{equation}\label{delta}
0<\delta< \frac{c^2}{c_R^2(2^{s+5m_2+1}3^{s+m_2}C)^{1/{m_2}}}
\end{equation}
and note that it depends only on $s$, $R$ and $C$.

We now show that for every $0<r<\min\{r_0,4 (c_R/c)^2\delta\}$ there exist $p_1, \dots, p_k$, $k \approx (\delta/r)^{m_2}$, such that
\begin{equation}\label{Vp}
B_E(p,r) \cap V(p)(\delta r) \subset \bigcup_{i=1}^k B_\infty(p_i, 2r).
\end{equation}
To prove it, let $L(p)=\{[q^1,q^2] \in \mathbb{G}: q^1=p^1\}$. If $q \in L(p)$ and $d_E(q, V(p)) \le \delta r$, then $|q^2-p^2| \le (c_R/c)^2 \delta r$. To see this, note that there exists $s=[s^1,p^2+P(p^1,s^1)] \in V(p)$ such that $d_E(q,s) \le \delta r$. This implies by \eqref{compd} that $d_\infty(q,s) \le c_R (\delta r)^{1/2}$ and since
\begin{align*}
d_\infty(q,s)= &\max \{ |p^1-s^1|, c |q^2-p^2-P(p^1,s^1)+P(p^1,s^1)|^{1/2} \} \\ 
&= \max \{ |p^1-s^1|, c |q^2-p^2|^{1/2}\},
\end{align*}
we have $c |q^2-p^2|^{1/2} \le c_R (\delta r)^{1/2}$, that is $ |q^2-p^2| \le (c_R/c)^2 \delta r$.
Let us cover the Euclidean ball with center $p^2$ and radius $(c_R/c)^2\delta r$ in $L(p) \subset \mathbb{R}^{m_2}$ with Euclidean balls
\begin{equation}\label{k}
B_E(p_j^2,r^2), \quad j=1, \dots, k, \quad k \le \left(\frac{2c_R}{c}\right)^{2m_2} \left(\frac{\delta r}{r^2}\right)^{m_2}
\end{equation} 
so that the corresponding balls of radii $r^2/2$ are disjoint. Let $p_j=[p^1,p_j^2] \in L(p)$. To see that \eqref{Vp} holds, let $q=[q^1,q^2] \in B_E(p,r) \cap V(p)(\delta r)$ and let $\bar{q}=[p^1, q^2-P(p^1,q^1)] \in L(p)$. Then, proceeding as in the proof of \eqref{dEqs}, we also have $d_E(\bar{q}, V(p)) \le \delta r$. 

Since $\bar{q} \in L(p) \cap V(p)(\delta r)$, which is contained in the Euclidean ball with center $p^2$ and radius $(c_R/c)^2\delta r$ in $L(p)$, there exists $j \in \{1, \dots k \}$ such that $d_E(\bar{q}, p_j)=|q^2-P(p^1,q^1)- p_j^2| \le r^2$. Hence
\begin{align*}
d_\infty(p_j,q)= \max \{ |p^1-q^1|, c | p_j^2-q^2+P(p^1,q^1)|^{1/2}\} \le \max \{r, cr\} \le r,
\end{align*}
which means that $q \in B_\infty(p_j,r)$. Thus \eqref{Vp} holds.

Hence by \eqref{Vp}, \eqref{red2}, \eqref{measball}, \eqref{k} and \eqref{delta} we have for every $0<r<r_0$
\begin{align*}
\mathcal{H}^s_E( A \cap B_E(p,r) \cap V(p)(\delta r)) &\le \sum_{i=1}^k \mathcal{H}^s_E(A \cap B_\infty(p_i,2r)) \\
& \le C \sum_{i=1}^k \mathcal{H}^{s+m_2}_\mathbb{G}(A \cap B_\infty(p_i,2r)) \\
& \le C  k 3^{s+m_2}2^{s+m_2} r^{s+m_2}\\
& \le  C \left(\frac{2c_R}{c}\right)^{2m_2} 3^{s+m_2}2^{s+3m_2}\left(\frac{\delta}{r}\right)^{m_2}  r^{s+m_2}  < \frac{1}{2}r^s.
\end{align*}
Thus for $\mathcal{H}^s_E$ almost every $p \in A$ there exists $\delta>0$ such that
\begin{equation*}
\limsup_{r\rightarrow 0} \frac{\mathcal{H}^s_E( A \cap B_E(p,r) \cap V(p)(\delta r)) }{(2r)^s}<\frac{1}{2^{s+1}},
\end{equation*}
which proves \eqref{claim2} by \eqref{upd} and \eqref{dz}.
\end{proof}

\begin{theorem}\label{ssmall1}
Let $0 < s<m_2$ and $A \subset \mathbb{G}$ be such that $\mathcal{H}^s_E(A)<\infty$. Then  for $\mathcal{H}^{2s}_\mathbb{G}$ almost every $p \in A$ there exists $\delta>0$ such that
\begin{equation}\label{claim3}
\limsup_{r \rightarrow 0} \frac{\mathcal{H}^s_E (A \cap B_E(p,r) \setminus V(p)(\delta r))}{(2r)^s} > 0.
\end{equation}
\end{theorem}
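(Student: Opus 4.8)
The plan is to follow the proof of Theorem 4 in \cite{MV}: this is a one-scale argument in the spirit of the proof of Theorem \ref{sgreat} (not the multi-scale argument behind Theorem \ref{thm1}), and the one point that genuinely differs is a covering observation valid precisely when $s<m_2$. First I would carry out the usual reduction. We may assume $A$ is a Borel set contained in $B_E(0,R)$, and $\mathcal{H}^{2s}_\mathbb{G}(A)>0$, since otherwise there is nothing to prove; then $\mathcal{H}^s_E(A)>0$ by \eqref{Hm}. Since $s<m_2$ forces $\beta_+(s)=2s$, \eqref{Hm} gives $\mathcal{H}^{2s}_\mathbb{G}(A)\le C_R\mathcal{H}^s_E(A)<\infty$ and, applied to subsets, shows that the restriction of $\mathcal{H}^{2s}_\mathbb{G}$ to $A$ is absolutely continuous with respect to that of $\mathcal{H}^s_E$; so by a Radon--Nikodym decomposition (exactly as in the reductions for Theorems \ref{thm1} and \ref{sgreat}) we may assume there is $C>0$ with
\[
\tfrac1C\,\mathcal{H}^s_E(B)\le\mathcal{H}^{2s}_\mathbb{G}(B)\le C\,\mathcal{H}^s_E(B)\qquad\text{for every }B\subset A,
\]
the analogue of \eqref{red2}; after this, $\mathcal{H}^s_E$-null and $\mathcal{H}^{2s}_\mathbb{G}$-null subsets of $A$ coincide.

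The key replacement for \eqref{Vp} is that, with $\delta>0$ fixed, for every $p\in A$ and every $r>0$ small enough that $r\le c_R^2\delta$ and $c_R\sqrt{\delta r}<1$,
\[
B_E(p,r)\cap V(p)(\delta r)\ \subset\ B_\infty\!\big(p,\,c_R\sqrt{\delta r}\,\big);
\]
indeed, taking $\rho=c_R\sqrt{\delta r}$ gives $\rho^2/c_R^2=\delta r$ and $\rho\ge r$, so the left-hand inclusion in \eqref{Vpneighb} yields $B_E(p,r)\cap V(p)(\delta r)\subset B_E(p,\rho)\cap V(p)(\rho^2/c_R^2)\subset B_\infty(p,\rho)$. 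This is where $s<m_2$ enters: as $r\to0$ the Euclidean $\delta r$-neighbourhood of $V(p)$ inside $B_E(p,r)$ is swallowed by a \emph{single} homogeneous ball of radius $\approx\sqrt{\delta r}$ (rather than the $\approx(\delta/r)^{m_2}$ balls of radius $2r$ used for Theorem \ref{sgreat}), and $(\sqrt{\delta r})^{2s}=\delta^{s}r^{s}$ is small compared with $r^{s}$.

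Then I would conclude via \eqref{upd}. By the above inclusion, the comparability, and the upper density bound $\mathcal{H}^{2s}_\mathbb{G}(A\cap B_\infty(p,\rho))\le 3^{2s}\rho^{2s}$ for small $\rho$, for almost every $p\in A$ and all sufficiently small $r$,
\[
\mathcal{H}^s_E\big(A\cap B_E(p,r)\cap V(p)(\delta r)\big)\le\mathcal{H}^s_E\big(A\cap B_\infty(p,c_R\sqrt{\delta r})\big)\le C\,\mathcal{H}^{2s}_\mathbb{G}\big(A\cap B_\infty(p,c_R\sqrt{\delta r})\big)\le C'\,\delta^{s}r^{s},
\]
with $C'=C(3c_R)^{2s}$ depending only on $s$, $R$, $C$. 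Subtracting from $\mathcal{H}^s_E(A\cap B_E(p,r))$ and using $\limsup_{r\to0}\mathcal{H}^s_E(A\cap B_E(p,r))/(2r)^s\ge 2^{-s}$ from \eqref{upd},
\[
\limsup_{r\to0}\frac{\mathcal{H}^s_E\big(A\cap B_E(p,r)\setminus V(p)(\delta r)\big)}{(2r)^s}\ \ge\ 2^{-s}-\frac{C'\delta^{s}}{2^{s}}\ >\ 0
\]
as soon as $\delta$ (depending only on $s$, $R$, $C$) is chosen with $C'\delta^{s}<1$; this proves \eqref{claim3}, and taking the union over the pieces from the reduction finishes the argument.

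The only step with real content is the covering inclusion above: realising that for $s<m_2$ one homogeneous ball of radius $\sim\sqrt{\delta r}$ suffices is exactly what makes the smaller exponent $2s$ (rather than $s+m_2$) appear and still leaves a nontrivial estimate. The rest is the bookkeeping in the reduction --- fix $C$, then $\delta$, then let $r\to0$ --- and checking the hypotheses $p\in B_E(0,R)$, $0<\rho<1$ before applying \eqref{Vpneighb}.
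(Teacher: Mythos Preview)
Your argument is correct, and it takes a genuinely different route from the paper's own proof. The paper argues by contradiction: it assumes \eqref{claim3} fails on a set $A_0$ of positive measure for every $\delta>0$, fixes $\delta,\epsilon$, passes to a subset $A'$ where the counterassumption \eqref{counterass} holds uniformly, applies Vitali's covering theorem to obtain disjoint balls $B_E(p_i,r_i)$, and then bounds $\mathcal{H}^{2s}_{\mathbb{G},\eta'}(A')$ by splitting each ball into the part near $V(p_i)$ and the part away. The near part is controlled via the diameter estimate $\mbox{diam}_\infty\big(B_E(p_i,r_i)\cap V(p_i)(\delta r_i)\big)\lesssim(\delta r_i)^{1/2}$ (equation \eqref{diamH}), the far part via \eqref{counterass}; summing and letting $\eta\to 0$, then $\delta,\epsilon\to 0$, yields the contradiction.

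Your approach replaces all of this with a direct pointwise argument modelled on the proof of Theorem~\ref{sgreat}: the same geometric fact --- that $B_E(p,r)\cap V(p)(\delta r)$ has $d_\infty$-diameter $\lesssim\sqrt{\delta r}$ --- is packaged as the single inclusion $B_E(p,r)\cap V(p)(\delta r)\subset B_\infty(p,c_R\sqrt{\delta r})$ via \eqref{Vpneighb}, and then the upper density bound for $\mathcal{H}^{2s}_\mathbb{G}$ is invoked instead of a diameter-to-the-$2s$ estimate on a pre-measure. This avoids both the contradiction setup and the Vitali covering, and as a bonus produces an explicit quantitative lower bound $2^{-s}-C'\delta^s/2^s$ for the $\limsup$, in the same style as \eqref{claim2}. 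The paper's route, by contrast, works entirely at the level of Hausdorff pre-measures and so does not need the $\mathcal{H}^{2s}_\mathbb{G}$ upper density at individual points; but since that density bound is available here (as $\mathcal{H}^{2s}_\mathbb{G}(A)<\infty$ follows from \eqref{Hm}), your shortcut is legitimate and cleaner.
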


\begin{proof}
The proof proceeds as in the case of $\mathbb{H}^1$ with small changes (see Theorem 4 in \cite{MV}), but we show it here for completeness.
We may assume that $A$ is a Borel, $A\subset B_E(0,R)$ for some $R>0$ and there exists $C>0$ such that 
\begin{equation}\label{Hs2s}
\frac{1}{C} \mathcal{H}^s_E(B) \le \mathcal{H}^{2s}_\mathbb{G}(B) \le C  \mathcal{H}^s_E(B)
\end{equation}
for every $B\subset A $.

The proof proceeds by contradiction. Assume that we can find a Borel set $A_0 \subset A$ such that $\mathcal{H}^s_E(A_0)>0$ and \eqref{claim3} fails for $p\in A_0$ for every $\delta>0$. Fix $\delta>0$ and $\epsilon>0$, which will be chosen sufficiently small at the end of the proof. Then there exist a Borel set $A' \subset A_0$ and $r_0>0$ such that $\mathcal{H}^s_E(A')>\mathcal{H}^s_E(A_0)/2$ and for every $p \in A'$ and for every $0<r<r_0$, 
\begin{equation}\label{counterass}
\mathcal{H}^s_E (A \cap B_E(p,r) \setminus V(p)(\delta r)) <\epsilon r^s.
\end{equation}
Let $0<\eta<\min \{r_0,\delta\}$. By \eqref{upd} for $\mathcal{H}^s_E$ almost all $p \in A'$ there exists $r_p<\eta$ such that 
\begin{equation}\label{rs}
\frac{r_p^s}{3^s} \le \mathcal{H}^s_E(A' \cap B_E(p,r_p)) \le 3^s r_p^s.
\end{equation} 
We apply Vitali's covering theorem (see e.g. Theorem 2.8 in \cite{M}) to the family $\{ B_E(p,r_p): p \in A' \ \mbox{such that } r_p \mbox{ exists}\}$ to find a subfamily of disjoint balls, $\{B_E(p_i,r_i)\}_{i=1}^\infty$, such that
\begin{equation}
\mathcal{H}^s_E\left( A' \setminus \bigcup_{i=1}^\infty B_E(p_i,r_i) \right)=0.
\end{equation}
This implies by \eqref{rs} that
\begin{align}\label{HsEAR}
\mathcal{H}^s_E(A') = \mathcal{H}^s_E\left(A'\cap \bigcup_{i=1}^\infty B_E(p_i,r_i)\right)=\sum_{i=1}^\infty  \mathcal{H}^s_E(A' \cap  B_E(p_i,r_i)) \ge  \frac{1}{3^s} \sum_{i=1}^\infty r_i^s.
\end{align}
Since $p_i \in B_E(0,R)$, we have by \eqref{compd} that 
\begin{equation}\label{change}
\mbox{diam}_\infty(B_E(p_i,r_i))\le c_R \mbox{diam}_E(B_E(p_i,r_i))^{1/2}\le c_R (2\eta)^{1/2}.
\end{equation}
Let $\eta'= c_R (2\eta)^{1/2}$. Then 
\begin{align}\label{H2sA}
\mathcal{H}^{2s}_{\mathbb{G},\eta'} (A')  \le&   \sum_{i=1}^\infty  \mathcal{H}^{2s}_{\mathbb{G},\eta'}(A' \cap  B_E(p_i,r_i)) \nonumber \\
\le &  \sum_{i=1}^\infty  \mathcal{H}^{2s}_{\mathbb{G},\eta'}(A' \cap  B_E(p_i,r_i) \cap V(p_i)(\delta r_i))\\ &+\sum_{i=1}^\infty  \mathcal{H}^{2s}_{\mathbb{G},\eta'}(A' \cap  B_E(p_i,r_i)\setminus V(p_i)(\delta r_i)).\nonumber
\end{align}
Moreover, we claim that
\begin{align}\label{diamH}
 \mathcal{H}^{2s}_{\mathbb{G},\eta'}(A'\cap  B_E(p_i,r_i) \cap V(p_i)(\delta r_i)) &\le \mbox{diam}_\infty(B_E(p_i,r_i) \cap V(p_i)(\delta r_i))^{2s} \nonumber \\ & \le (2( c_R+2) (\delta r_i)^{1/2})^{2s}= C'' (\delta r_i)^s. 
\end{align}
To see this, let $q, q' \in B_E(p_i,r_i) \cap V(p_i)(\delta r_i)$ and let $\bar{q}, \bar{q}' \in  V(p_i)$ be such that $d_E(q,\bar{q}) \le \delta r_i$ and $d_E(q',\bar{q}') \le \delta r_i$. Then
\begin{align*}
d_\infty(q,q') \le d_\infty(q,\bar{q})+d_\infty(\bar{q},\bar{q}')+d_\infty(\bar{q}',q').
\end{align*}
We have by \eqref{compd}
\begin{equation}\label{change2}
d_\infty(q,\bar{q}) \le c_Rd_E(q,\bar{q})^{1/2} \le c_R(\delta r_i)^{1/2}, \quad d_\infty(q',\bar{q}') \le c_R d_E(q',\bar{q}')^{1/2} \le c_R(\delta r_i)^{1/2}
\end{equation} 
and by \eqref{dVp}, since $\bar{q}, \bar{q}' \in V(p_i)$,
\begin{equation*}
d_\infty(\bar{q},\bar{q}') \le d_\infty(\bar{q},p_i)+d_\infty(p_i, \bar{q}') \le d_E(\bar{q},p_i)+d_E(p_i, \bar{q}') \le 4r_i.
\end{equation*}
Since $r_i < \eta < \delta$, it follows that $r_i^{2} < \delta r_i$, which implies $r_i \le (\delta r_i)^{1/2}$. Hence
\begin{equation*}
d_\infty(q,q') \le 2 c_R (\delta r_i)^{1/2} +4 r_i \le 2(c_R+2) (\delta r_i)^{1/2},
\end{equation*}
which proves \eqref{diamH}.
On the other hand, by \eqref{Hs2s} and \eqref{counterass} we have
\begin{align*}
\mathcal{H}^{2s}_{\mathbb{G}}(A' \cap  B_E(p_i,r_i)\setminus V(p_i)(\delta r_i)) \le C \mathcal{H}^{s}_{E}(A' \cap  B_E(p_i,r_i)\setminus V(p_i)(\delta r_i))<C \epsilon r_i^s,
\end{align*}
thus also
\begin{equation}\label{minus}
\mathcal{H}^{2s}_{\mathbb{G},\eta'}(A' \cap  B_E(p_i,r_i)\setminus V(p_i)(\delta r_i))< C \epsilon r_i^s.
\end{equation}
Hence \eqref{H2sA}, \eqref{diamH}, \eqref{minus} and \eqref{HsEAR} imply that
\begin{align*}
\mathcal{H}^{2s}_{\mathbb{G},\eta'} (A') \le (C''\delta^s+C\epsilon) \sum_{i=1}^\infty r_i^s \le 3^s ( C''\delta^s+C\epsilon) \mathcal{H}^s_E(A')
\le  3^s 2( C''\delta^s+C\epsilon) \mathcal{H}^s_E(A_0).
\end{align*}
Letting $\eta$ (thus also $\eta'$) go to $0$, 
\begin{align*}
0<\mathcal{H}^{2s}_{\mathbb{G}} (A_0) < 2\mathcal{H}^{2s}_{\mathbb{G}} (A') \le 3^s 2 ( C''\delta^s+C\epsilon) \mathcal{H}^s_E(A_0).
\end{align*}
Choosing $\delta$ and $\epsilon$ arbitrarily small, we get a contradiction, which completes the proof.
\end{proof}

\section{Examples}

We now present some examples that show in which sense Theorem \ref{thm1} is sharp. Examples \ref{ex1s1}, \ref{ex2s1}, \ref{ex1} and \ref{ex2} show that the conclusion of Theorem \ref{thm1} does not hold for any arbitrary small neighbourhood. In particular, we cannot replace $\liminf$ by $\limsup$ or the $r^{1+\epsilon}$-neighbourhood of $V(p)$ by the $r^2$-neighbourhood, which essentially behaves like the ball $B_\infty(p,r)$ inside $B_E(p,r)$ by \eqref{Vpneighb}. The first two examples in Section \ref{exs1} are $1$-dimensional sets in any Carnot group of step $2$, whereas the other two in Section \ref{sm1h} are $m$-dimensional sets in the Heisenberg group $\mathbb{H}^m$. Note that $\mathbb{H}^m$ can be identified with $\R^{2m+1}$, where $m_1=2m$, hence these sets have dimension $m_1/2$. These four examples generalize those of $\mathbb{H}^1$, see Examples 5 and 6 in \cite{MV}. Example \ref{ex3} in Section \ref{sbig} shows that in any Carnot group of step $2$ sets of Euclidean Hausdorff dimension $s$ greater than $m_1$ and homogeneous dimension $\beta_-(s)=2s-m_1$ can have positive density also far from horizontal planes.

\subsection{Examples for $s=1$}\label{exs1}

The following examples can be constructed in any Carnot group $\mathbb{G}$ of step $2$ following the same procedure used in $\mathbb{H}^1$ for Examples 5 and 6 in \cite{MV}. 

\begin{ex}\label{ex1s1}
There exists a compact set $F \subset \mathbb{G}$ such that for some positive constant $C$, $\mathcal{H}^1_\mathbb{G}(F)>0$ and $\mathcal{H}^1_\mathbb{G}(A) \leq  C\mathcal{H}^1_E(A)<\infty$ for $A\subset F$, and for $p \in F$, 
\begin{equation*}
\limsup_{r \rightarrow 0} \frac{\mathcal{H}^1_E (F \cap B_E(p,r) \setminus V(p)(r/8))}{2r} \geq \frac{1}{8}.
\end{equation*}
\end{ex}

\begin{ex}\label{ex2s1}
For any $M, 1<M<\infty$, there exists a compact set $F \subset \mathbb{G}$  such that for some positive constant $C$, $\mathcal{H}^1_\mathbb{G}(F)>0$ and $\mathcal{H}^1_\mathbb{G}(A) \leq  C\mathcal{H}^1_E(A)<\infty$ for $A\subset F$, and for $p \in F$, 
\begin{equation*}
\liminf_{r \rightarrow 0} \frac{\mathcal{H}^1_E (F \cap B_E(p,r) \setminus V(p)(Mr^2))}{2r} \geq \frac{1}{16}.
\end{equation*}
\end{ex}

The sets in the examples will be subsets of the $(x_1,x_{m_1+1})$-plane
\begin{align}\label{V}
V_{1,m_1+1}=\{& p=(x_1, \dots, x_{m_1},x_{m_1+1}, \dots, x_n) \in \mathbb{G}: \nonumber \\& x_2=\dots = x_{m_1}=0, x_{m_1+2}=\dots =x_n=0\}.
\end{align}
Note that for $p, q \in V_{1,m_1+1}$, we have $P(p^1,q^1)=0$ by \eqref{Pj} and we will denote the points by $p=(x_1,x_{m_1+1})$, $q=(y_1,y_{m_1+1})$. It follows that
\begin{displaymath}
d_\infty(p,q)=\max \{ |x_1-y_1|, c|x_{m_1+1}-y_{m_1+1}|^{1/2} \}
\end{displaymath}
and 
\begin{displaymath}
V(p) \cap V_{1,m_1+1}=\{ q=(y_1,y_{m_1+1}) \in V_{1,m_1+1}: y_{m_1+1}=x_{m_1+1} \}.
\end{displaymath}
The sets in Examples \ref{ex1s1} and \ref{ex2s1} can now be constructed with the same procedure used in $\mathbb{H}^1$, starting with the unit square $[0,1]^2 \subset V_{1,m_1+1}$, taking certaing subrectangles and iterating the construction. In the next section we will use the same procedure in higher dimensions and so we do not repeat it here. In this case, since we are in the plane, it would be exactly the same as the one used in \cite{MV}, which corresponds to $m=1$ in the next section.

\subsection{Examples for $s=m_1/2$ in Heisenberg groups}\label{sm1h}

We now construct similar examples to those in the previous section but having dimension half the dimension of the first layer and only in Heisenberg groups.
As briefly explained in Section \ref{step2prel}, we denote points of $\mathbb{H}^m $ by $p=(x,y,t) \in \R^{2m+1}$ and the group operation was defined in \eqref{operHn}. A homogeneous metric that is commonly used in $\mathbb{H}^m$ is the Kor\'anyi one, defined by
\begin{displaymath}
d_H(p,q)=((|x-x'|^2+|y-y'|^2)^2+(t-t'+2(\langle x',y \rangle - \langle x, y' \rangle)^2)^{1/4},
\end{displaymath}
where $q=(x',y',t')$.
We denote by $\mathcal{H}^m_H$ the $m$-dimensional Hausdorff measure with respect to $d_H$. The horizontal hyperplane $V(p)$ passing through a point $p=(x,y,t)$ consists of those points $q=(x',y',t') \in \mathbb{H}^m$ such that
\begin{equation}\label{VpHn}
t'=t+2(\langle x',y \rangle - \langle x, y'\rangle).
\end{equation}

\begin{ex}\label{ex1}
There exists a compact set $F \subset \mathbb{H}^m$ such that $\mathcal{H}^m_H(F)>0$, $\mathcal{H}^m_H(A) \leq  C\mathcal{H}^m_E(A)<\infty$ for any $A\subset F$ for some constant $C>0$, and for $p \in F$, 
\begin{equation}\label{claim4} 
\limsup_{r \rightarrow 0} \frac{\mathcal{H}^m_E (F \cap B_E(p,r) \setminus V(p)(r/8))}{(2r)^m} \geq \left( \frac{1}{8} \right)^m.
\end{equation}
\end{ex}

\begin{ex}\label{ex2}
For any $M$, $1<M<\infty$, there exists a compact set $F \subset \mathbb{H}^m$  such that $\mathcal{H}^m_H(F)>0$, $\mathcal{H}^m_H(A) \leq  C\mathcal{H}^m_E(A)<\infty$ for $A\subset F$ for some constant $C>0$, and for $p \in F$, 
\begin{equation}\label{claim5}
\liminf_{r \rightarrow 0} \frac{\mathcal{H}^m_E (F \cap B_E(p,r) \setminus V(p)(Mr^2))}{(2r)^m} \geq \left( \frac{1}{2^5} \right)^m.
\end{equation}
\end{ex}

Both examples are based on the following construction, which is a simple generalization of the one used in Examples 5 and 6 \cite{MV}.
Let
\begin{equation*}
V_{x,t}=\{(x,y,t) \in \mathbb{H}^m: y=0\} \subset \R^{m+1}.
\end{equation*}
In both examples the set $F$ will be a subset of $V_{x,t}$. We denote the points of $V_{x,t}$ by $p=(x,t)$.
For $p=(x,t), q=(x',t') \in V_{x,t}$ we have
\begin{align*}
d_H(p,q)=(|x-x'|^4+|t-t'|^2)^{1/4}.
\end{align*}
For $p=(x,t) \in V_{x,t}$, \eqref{VpHn} implies that
\begin{align*}
V(p) \cap V_{x,t}=\{ (x',t'): t'=t\}.
\end{align*}

\begin{rem}
The fact that $V(p) \cap V_{x,t}$ is parallel to the $x$-coordinates plane simplifies significantly the construction of Examples \ref{ex1} and \ref{ex2}. In a step $2$ Carnot group $\mathbb{G}$ we cannot in general have this simplification, since it would correspond to finding a certain $V'\subset \mathbb{G}$ such that 
\begin{equation}\label{cond}
P(p^1,q^1)=0 \quad \mbox{for all} \quad p, q \in V',
\end{equation}
which is what happens in $\mathbb{H}^m$ when $V'=V_{x,t}$. This holds, for example, when the points in $V'$ have all coordinates of the first layer equal to zero except for $1$, as in the case of $V_{1,m_1+1}$ defined in \eqref{V}. In this case, however, we would have $V' \subset \R^{1+m_2}$. We were not able to find another easy way to ensure that \eqref{cond} would hold if, for example, only half of the coordinates of $\R^{m_1}$ were zero, as in the case of $V_{x,t}$ in $\mathbb{H}^m$. For this reason we construct the examples corresponding to $s=m_1/2$ only in $\mathbb{H}^m$.
\end{rem}

Given $L>0$, an integer $N\ge1$, $0<\lambda\le \frac{1}{2}$ and the $(m+1)$-dimensional parallelepiped
\begin{displaymath}
R=[a_1,b_1] \times \dots \times [a_m,b_m] \times [c,d] \subset V_{x,t}
\end{displaymath}
such that $b_i-a_i=L$ for every $i=1, \dots, m$ and $\lambda L\le (d-c)/2$, we choose $(2N)^m$ subsets of $R$, which are $(m+1)$-dimensional parallelepipeds, in the following way. We divide $\prod_{i=1}^m [a_i,b_i]$ into $(2N)^m$ identical $m$-dimensional cubes $C_j$, $j=1, \dots, (2N)^m$, of side length $L/2N$. We choose $R_1=R^c_1=C_1\times [c,c+\lambda L]$. Then for each $j$ we define either $R_j=R_j^c=C_j \times [c,c+\lambda L]$ or $R_j=R_j^d=C_j \times[d-\lambda L, d]$ in such a way that two distinct $R_i^c$, $R_j^c$ do not share an $m$-dimensional face (and the same for $R_i^d, R_j^d$). Hence we have $(2N)^m/2$ sub-parallelepipeds of the form $R_i^c$ and $(2N)^m/2$ of the form $R_i^d$. Then we define $\mathcal{R}(R,N,\lambda)$ as the collection of these $(2N)^m$ parallelepipeds and we have that 
\begin{align}\label{projx}
\pi_x\left(\bigcup_{R \in \mathcal{R}(R,N,\lambda)} R \right)= \prod_{i=1}^m [a_i,b_i],
\end{align} 
where $\pi_x(x,t)=x$.

The case $m=1$ corresponds to the construction in Examples 5 and 6 in \cite{MV}.
Let us write down explicitly the case $m=2$. Given $R=[a_1,b_1] \times [a_2,b_2]\times [c,d]$, with $b_i-a_i=L$, an integer $N\ge1$ and $0<\lambda\le \frac{1}{2}$ such that $\lambda L \le (d-c)/2$, $\mathcal{R}(R,N,\lambda)$ is defined as the following collection of $4N^2$ sub-parallelepipeds of $R$.

\begin{itemize}
\item For $i,j=0,1, \dots, N-1$, let \begin{align*}
R^c_{i,j}=\left[a_1+2 i \frac{L}{2N},a_1+2 i \frac{L}{2N}+\frac{L}{2N}\right] \times \left[a_2+2 j \frac{L}{2N},a_2+2 j \frac{L}{2N}+\frac{L}{2N}\right]\times [c,c+\lambda L]
\end{align*}
and for $l,k=0,1, \dots, N-1$, \begin{align*}
R^c_{l,k}=&\left[a_1+(2 l+1) \frac{L}{2N},a_1+(2 l+1) \frac{L}{2N}+\frac{L}{2N}\right]\\ &\times \left[a_2+(2 k+1) \frac{L}{2N},a_2+(2 k+1) \frac{L}{2N}+\frac{L}{2N}\right]\times [c,c+\lambda L].
\end{align*}
Thus we have $2N^2$ sub-parallelepipeds of this form.
\item For $i,j=0,1, \dots, N-1$, let \begin{align*}
R^d_{i,j}=&\left[a_1+2 i \frac{L}{2N},a_1+2 i \frac{L}{2N}+\frac{L}{2N}\right] \times \left[a_2+(2 j+1) \frac{L}{2N},a_2+(2 j+1) \frac{L}{2N}+\frac{L}{2N}\right]\\ &\times [d-\lambda L,d]
\end{align*}
and  for $l,k=0,1, \dots, N-1$,\begin{align*}
R^d_{l,k}=&\left[a_1+(2 l+1) \frac{L}{2N},a_1+(2 l+1) \frac{L}{2N}+\frac{L}{2N}\right] \times \left[a_2+2 k \frac{L}{2N},a_2+2 k \frac{L}{2N}+\frac{L}{2N}\right]\\ &\times [d-\lambda L,d].
\end{align*}
Thus we have $2N^2$ sub-parallelepipeds of this form.
\end{itemize}

Let us now construct the sets in Examples \ref{ex1} and \ref{ex2} for any $m \ge 1$.
Given a sequence of integers $(N_k)_k$, $N_k\geq 1$, and a sequence of positive numbers $(\lambda_k)_k$, $\lambda_k\leq 1/2$, we define for $k\geq 1$, 

$$\mathcal R_0 = \mathcal R([0,1]^{m+1},1,1/2),$$
$$\mathcal R_{k} = \bigcup_{R\in\mathcal R_{k-1}}\mathcal R(R,N_k,\lambda_k),$$
and 
$$F = \bigcap_{k=0}^{\infty}\bigcup_{R\in\mathcal R_k}R.$$

It follows that $F\subset V_{x,t}$ is compact and $\pi_x(F)=[0,1]^m$ by \eqref{projx}, thus both $\mathcal{H}^m_E(F)$ and $\mathcal{H}^m_H(F)$ are at least $1$. We can also check, using the natural coverings with the parallelepipeds of $\mathcal R_k$, that these measures are finite if $\lambda_k$ goes to $0$ sufficiently fast. To see this, let $h_k$ be the length of the sides in the directions of the $x$-coordinates of the parallelepipeds of $\mathcal R_k$ (all sides in the directions of the $x$-coordinates have the same length) and let $v_k$ be the length of their vertical sides (in the $t$-direction). Note that 
\begin{displaymath}
h_0=\frac{1}{2},\ h_k=\frac{h_{k-1}}{2N_k} \ \mbox{for } k \ge 1, \quad v_0=\frac{1}{2},\ v_k=\lambda_k h_{k-1}\ \mbox{for } k \ge 1.
\end{displaymath}
For every $R\in \mathcal R_k$, we have
\begin{displaymath}
\diam_E(R) = (m h_k^2+v_k^2)^{1/2} \quad \mbox{and} \quad \diam_H(R) = (m^2 h_k^4+v_k^2)^{1/4}.
\end{displaymath}
If $v_k/h_k$ tends to zero as $k\to\infty$, then for $R\in\mathcal R_k$,
\begin{equation}\label{eq7}
h_k^m \le \mathcal{H}^m_E(F\cap R) \le (\sqrt{m} h_k)^m,
\end{equation}
which implies 
\begin{displaymath}
1 \le \mathcal{H}^m_E(F)\le (\#\mathcal R_{k}) (\sqrt{m} h_k)^m= (2 N_k)^m (\sqrt{m} h_k)^m  \le  m^{m/2}.
\end{displaymath}
If moreover, $v_k \le Ch_k^2$ for some $0<C<\infty$ for all large enough $k$, then for $A\subset F$,
\begin{equation}\label{eq1}
\mathcal{H}^m_H(A)\leq \frac{(m^2+C^2)^{1/4}}{\sqrt{m}} \mathcal{H}^m_E(A) .
\end{equation}
In Example \ref{ex1} we will choose $v_k = h_k^2$ and in Example \ref{ex2} $v_k =136 Mm h_k^2$ for large $k$, hence the above conditions on $h_k$ and $v_k$ will be satisfied. 

For Example \ref{ex1} let $N_k = 2^{^{2^{k-1}-1}}$ and $\lambda_k = 2^{-3\cdot 2^{k-1}}$. It follows that $h_k=2^{^{-2^k}}$ and $v_k=2^{^{-2^{k+1}}}$ for $k\ge 1$. Thus  $h_{k+1}=v_k$, which means that for $R\in\mathcal R_k$, the horizontal sides of each parallelepiped $R'$ of $\mathcal R_{k+1}$ inside $R$ have the same length as the vertical sides of $R$. This implies that for $p\in R'$ and $r_k=4h_{k+1}$, 
$B_E(p,r_k) \setminus V(p)(r_k/8)$ contains another parallelepiped $R''$ of $\mathcal R_{k+1}$. Indeed, if $R'$ is of the form $R^c_i$ for some $i$ then there is one $R^d_j$ that is at least at vertical distance $v_k- 2v_{k+1}>v_k/2=r_k/8$ from $p$, so it is outside $V(p)(r_k/8)$, and is contained in $B_E(p,r_k)$ (and similarly if $R'$ is of the form $R^d_i$ we can find $R''$ of the form $R^c_j$).
Hence, using \eqref{eq7},
$$\frac{\mathcal{H}^m_E (F \cap B_E(p,r_k) \setminus V(p)(r_k/8))}{(2r_k)^m} \geq \frac{\mathcal{H}^m_E (F \cap R'')}{(8h_{k+1})^m} =\left( \frac{1}{8} \right)^m,$$
from which \eqref{claim4} follows. Recalling also \eqref{eq1}, we have completed Example \ref{ex1}.

For Example \ref{ex2}, given $1<M<\infty$, we let $N_k = 1$ for all $k \ge1$ and $\lambda_k =1/2$ when $68Mm4^{-k}\geq 2^{-k}$, that is, $2^k\leq 68Mm$, and $\lambda_k=68Mm2^{-k-1}$ for all larger $k$. Thus, for large enough $k$ so that $2^k > 68Mm$, we have $h_k=2^{-k-1}$ and $v_k=34Mm4^{-k}$. For $R\in\mathcal R_k$, there are $2^m$ sub-parallelepipeds $R_j$ of $\mathcal R_{k+1}$ inside $R$, of which half are of the form $R_j^d$ and half of the form $R_i^c$. The distance between any $R_j^d$ and any $R_i^c$ is at least $v_k-2v_{k+1}=34Mm4^{-k}-2\cdot 34Mm4^{-k-1}=17Mm4^{-k}$.

Given $0<r<1$, let $k$ be such that $\sqrt{m}2^{1-k}\leq r < \sqrt{m}2^{2-k}$. We may assume that $r$ is small enough so that $2^k > 68Mm$. Let $R, R_j^d$ and $R_i^c$ be as in the previous paragraph and $p\in R_j^d$. Since $Mr^2 \leq Mm4^{2-k}< 17Mm4^{-k}$, all the parallelepipeds $R_i^c$'s lie outside $V(p)(Mr^2)$. On the other hand, note that $\sqrt{m}h_k= \sqrt{m}2^{-k-1} \le r/2$ and $v_k= 34Mm4^{-k} \le  r/2$ because $2^k > 68Mm$. Hence we have
\begin{align*}
 \mbox{diam}_E(R) \le \sqrt{mh_k^2+v_k^2} \le \sqrt{\frac{r^2}{4}+\frac{r^2}{4}}\le r,
\end{align*}
which means that each $R_i^c$ lies inside $B_E(p,r)$. 
This implies that $B_E(p,r) \setminus V(p)(Mr^2)$ contains (at least one) $R_i^c$, thus
\begin{align*}
\frac{\mathcal{H}^m_E (F \cap B_E(p,r) \setminus V(p)(Mr^2))}{(2r)^m} \ge \frac{\mathcal{H}^m_E(F \cap R_i^c)}{(2r)^m}\ge\left( \frac{\sqrt{m}2^{-k-2}}{\sqrt{m}2^{3-k}}\right)^m=  \left(\frac{1}{2^{5}}\right)^m,
\end{align*}
which is what we wanted to show in Example \ref{ex2}.

\subsection{Example for $m_1\le s \le n$}\label{sbig}

The last example that we present shows that in any Carnot group $\mathbb{G}$ of step $2$ the conclusion of Theorem \ref{thm1} does not hold when $s \ge m_1$.

We will use the following estimate. For any $p^1=(x_1, \dots, x_{m_1})$, $q^1=(y_1, \dots, y_{m_1})$, 
\begin{align}\label{P}
|P(p^1,q^1)|^2 &= \sum_{j=m_1+1}^n P_j(p^1,q^1)^2 \nonumber \\
&=\sum_{j=m_1+1}^n \left( \sum_{1\le l<i\le m_1} b^j_{l,i} (x_l y_i-x_i y_l) \right)^2 \nonumber \\
&= \sum_{j=m_1+1}^n \left( \sum_{1\le l<i\le m_1} b^j_{l,i} \langle (x_l,y_l), (y_i,-x_i) \rangle \right)^2\\
&\le m_2 m_1 (m_1-1) \max_{m_1+1\le j \le n} \max_{1\le l<i \le m_1} (b^j_{l,i})^2 |p^1-q^1|^2 |p^1+q^1|^2 \nonumber \\ &\le C_R^2 |p^1-q^1|^2 \nonumber,
\end{align}
where after \eqref{P} we used Cauchy-Schwarz and
\begin{equation}\label{CR}
C_R=  2 R \sqrt{m_2 m_1 (m_1-1) \max_{m_1+1\le j \le n} \max_{1\le l<i\le m_1} (b^j_{l,i})^2 }.
\end{equation}
Instead of line \eqref{P} we can also do the following.
\begin{align}\label{P2}
|P(p^1,q^1)|^2&= \sum_{j=m_1+1}^n \left( \sum_{1\le l<i\le m_1} b^j_{l,i} \langle (x_l,-x_i), (y_i,y_l) \rangle \right)^2 \nonumber\\ 
& \le  m_2 m_1 (m_1-1) \max_{m_1+1\le j \le n} \max_{1\le l<i \le m_1} (b^j_{l,i})^2 |p^1|^2 |q^1|^2 \nonumber \\
& \le C_R^2 |q^1|^2.
\end{align}

\begin{ex}\label{ex3}
For any $m_1\le s \le n$, there exist $c_s,\delta_s>0$ and a compact set $F_s \subset \mathbb{G}$ such that $\mathcal{H}^s_E(F_s)>0$, $\mathcal{H}^{2s-m_1}_\mathbb{G}(F_s) < \infty$ and for $\mathcal{H}^s_E$-almost every $p \in F_s$,
\begin{equation}\label{claimex3}
\liminf_{r\rightarrow 0} \frac{\mathcal{H}^s_E(F_s \cap B_E(p,r)\setminus V(p)( \delta_s r))}{r^s}\ge c_s.
\end{equation}
\end{ex}

This construction comes from Thereom 5.4 in \cite{RV}, where it is done in Carnot groups of any step, and it modifies the self-similar construction used in Proposition 4.14 in \cite{BTW}. We use the same notation, which we recall here. 
Let $m_1\le s \le n$. Let $A_1=\{0,1\}^{m_1}$, $A_2=\{0,1,2,3\}^{m_2}$ and define
\begin{displaymath}
\mathcal{F}_1=\{F_{a_1}: a_1 \in A_1 \}
\end{displaymath}
and
\begin{displaymath}
\mathcal{F}_2=\{F_{a_1 a_2}: a_1 \in A_1, a_2 \in A_2 \},
\end{displaymath}
where
\begin{displaymath}
F_{a_1}(p)= [a_1,0] \cdot \delta_{1/2}([-a_1,0]\cdot p)
\end{displaymath}
and
\begin{displaymath}
F_{a_1 a_2}(p)= [a_1, a_2] \cdot \delta_{1/2}([-a_1,-a_2 ]\cdot p).
\end{displaymath}
Then define a sequence $(n_i)_{i \in \mathbb{N}}$ by induction as follows. Let $n_1=2$ and, assuming that $n_1, \dots, n_t$ have been defined, let $n_{t+1}=2$ if
\begin{displaymath}
\prod_{i=1}^t n_i^{2 m_2} < 2^{2t(s -m_1)}
\end{displaymath}
and $n_{t+1}=1$ otherwise. If $n_i=1$ we will use the system $\mathcal{F}_1$ at step $i$, otherwise we will use $\mathcal{F}_2$. Write $\mathcal{F}_2=\{g_1, g_2, \dots, g_{2^{m_1+2m_2}} \}$, where $g_i \notin \mathcal{F}_1$ for $2^{m_1}<i\le 2^{m_1+2m_2} $. Let $I=\{ 1,2, \dots, 2^{m_1+2m_2}\}$, $I^n=\{\texttt{i}=(i_1, \dots, i_n):i_j \in I\}$, $I^*=\cup_{n=1}^\infty I^n$, $I^\infty=I^\mathbb{N}$ and
\begin{displaymath}
J=\{ \texttt{i}=(i_1, i_2, \dots): i_j \in \mathbb{N}, 1\le i_j \le n_j^{2m_2} 2^{m_1}\} \subset I^\infty.
\end{displaymath}
Then we write $J_n=\{\texttt{i} \in I^n: [\texttt{i}] \cap J \neq \emptyset \}$ and $J_*=\cup_{n=1}^\infty J_n$. Here $[\texttt{i}]$ is the cylinder set of $\texttt{i}$, that is the set of $\texttt{i}\texttt{j} \in I^\infty$ obtained by juxtaposing $\texttt{i} \in I^n$ and $\texttt{j}\in I^* \cup I^\infty$. For $\texttt{i}=(i_1, \dots, i_t) \in I^t$ let
\begin{displaymath}
X_\texttt{i}=g_{i_1 } \circ \dots \circ g_{i_t}(E),
\end{displaymath}
where $E$ is the attractor of $\mathcal{F}_2$. For $\texttt{i}=(i_1,\dots,i_n)\in I^n$ we denote by $|\texttt{i}|=n$ its length, by $\texttt{i}|_m \in I^m$ with $1\le m <n$ the element $(i_1, \dots, i_m)$ and by $\texttt{i}^-$ the element $\texttt{i}|_{n-1}$.
The collection $\{X_\texttt{i}: \texttt{i} \in J_*\}$ is a $(2 s - m_1)$-controlled Moran sub-construction, which means that it satisfies the following conditions:
\begin{enumerate}
\item[(i)] $X_\texttt{i}  \subset X_{\texttt{i}^-}$,
\item[(ii)] there exist a constant $D\ge 1$ and $n \in \mathbb{N}$ such that \begin{displaymath}
\max_{\texttt{i} \in J_n} \mbox{diam}_\infty (X_\texttt{i}) < 1/D,
\end{displaymath}
\item[(iii)] there exists a constant $C>0$ such that for every $\texttt{i} \in J_*$ and $m \in \mathbb{N}$,
\begin{equation}\label{Msc}
\frac{\mbox{diam}_\infty(X_\texttt{i})^{2 s-m_1}}{C} \le \sum_{\texttt{j}\in I^m, \texttt{ij} \in J_*} \mbox{diam}_\infty(X_{\texttt{ij}})^{2 s-m_1} \le C \mbox{diam}_\infty(X_\texttt{i})^{2 s-m_1}.
\end{equation}
\end{enumerate}
We refer to Section 5 in \cite{RV} for more details and examples of Moran sub-constructions. Since the maps $g_j$ are all contractions of ratio $1/2$ with respect to $d_\infty$, we have 
\begin{displaymath}
\mbox{diam}_\infty (X_\texttt{i}) \le \frac{1}{2^{|\texttt{i} |}}\mbox{diam}_\infty(E).
\end{displaymath}
Moreover, in \eqref{Msc} we can take $C=2^{4 m_2}$. Let $F_s$ be the limit set of this sub-construction. It is shown in Theorem 5.4 in \cite{RV} that $F_s$ is compact,
\begin{displaymath}
\mathcal{H}^{2s - m_1}_\mathbb{G}(F_s)< \infty \quad \mbox{and} \quad \mathcal{H}^s_E(F_s) >0.
\end{displaymath}

Let $\pi_1: \mathbb{G}\rightarrow \mathbb{R}^{m_1}$ and $\pi_2: \mathbb{G}\rightarrow \mathbb{R}^{m_2}$ be the orthogonal projections to the first and second layer respectively. Note that for every $a_1\in A_1$, $a_2 \in A_2$ and $p=[p^1,p^2] \in \mathbb{G}$,
\begin{displaymath}
\pi_1(F_{a_1}([p^1,p^2])=\frac{1}{2}(p^1-a_1) + a_1= \pi_1(F_{a_1 a_2}([p^1,p^2]),
\end{displaymath}
which means that $\pi_1(F_s)$ is the invariant set for the Euclidean self-similar iterated function system $\{ \pi_1(F_{a_1}): a_1 \in A_1\}$. The construction used in Proposition 4.14 in \cite{BTW} gives the same. It follows that $\pi_1(F_s)$ is a Euclidean self-similar set. It also satisfies the open set condition with the open set $(0,1)^{m_1}$, thus $\mathcal{H}^{m_1}_E(\pi_1(F_s))>0$ and it is $m_1$-Ahlfors regular.

Now for almost every $q^1 \in \pi_1(F_s) $ the set $\pi_2(F_s \cap \pi_1^{-1}(q^1))$ is a Euclidean translate of the limit set $K'$ of the Euclidean construction $\{ Y_\texttt{i}: \texttt{i} \in J'_*\}$ with
\begin{align*}
& Y_\texttt{i}=h_{i_1} \circ \dots \circ h_{i_m}([0,2]^{m_2}),\\
& J'=\{ \texttt{i}=(i_1,i_2, \dots ): 1 \le i_j \le n_j^{2 m_2} \}, \\
& h_{i_j}(y)= \frac{1}{4}y+\frac{3}{4}a_{i_j}.
\end{align*}
This is different from the construction in \cite{BTW}, where these level sets $\pi_2(F_s \cap \pi_1^{-1}(q^1))$ are Euclidean translates of an invariant set of a Euclidean self-similar iterated function system.
The collection $\{ Y_\texttt{i}: \texttt{i} \in J'_*\}$ is an $(s-m_1)$-Moran sub-construction, so it satisfies properties (i), (ii), (iii) above with respect to the Euclidean metric. In particular, there is a constant $C>0$ such that for every $\texttt{i} \in J'_*$ and $n \in \mathbb{N}$,
\begin{equation}\label{CMSC}
\frac{\mbox{diam}_E(Y_\texttt{i})^{ s-m_1}}{C} \le \sum_{\texttt{j} \in I^n, \texttt{ij} \in J'_*} \mbox{diam}_E(Y_{\texttt{ij}})^{ s-m_1} \le C \mbox{diam}_E(Y_\texttt{i})^{ s-m_1}.
\end{equation}
We can take again $C=2^{4m_2}$. In the proof of Theorem 5.4 in \cite{RV} it is also shown that $\mathcal{H}^{s-m_1}_E(K')>0$ since it satisfies the finite clustering property, which is a separation condition defined in Section 3 in \cite{RV} stating the following. Letting for $y \in K'$ and $r>0$,
\begin{displaymath}
Z(y,r)= \{ \texttt{i} \in J'_*: \mbox{diam}_E(Y_\texttt{i}) \le r < \mbox{diam}_E(Y_{\texttt{i}^-}), Y_\texttt{i} \cap B_E(y,r) \neq \emptyset\},
\end{displaymath}
we have
\begin{equation*}
Z=\sup_{y \in K'} \limsup_{r \rightarrow 0} \#Z(y,r) < \infty.
\end{equation*}
Writing $K' = \cup_{l=1}^\infty K'_l$, where $K'_l=\{ y \in K': \#Z(y,r) \le Z \ \forall \ 0<r<1/l\}$, we could run the argument below for every $l$ such that $\mathcal{H}^{s-m_1}_E(K'_l) >0$ so we can assume that $\#Z(y,r) \le Z$ for every $y \in K'$ and $0<r<r_0$ for a certain $r_0 >0$.

We claim that for every $y \in K'$ and every $0<r < r_0$ we have
\begin{equation}\label{ADrK'}
\frac{r^{s-m_1}}{C_s} \le \mathcal{H}^{s-m_1}_E(K'\cap B_E(y,r)) \le C_s r^{s-m_1},
\end{equation}
for some constant $C_s>0$. To this end, for $y \in K'$ and $0<r < r_0$, note that for every $m \in \mathbb{N}$
\begin{displaymath}
K' \cap B_E(y,r) \subset \bigcup_{\texttt{i}\in Z(y,r)} \bigcup_{\texttt{j} \in I^m, \texttt{i}\texttt{j} \in J'_*} Y_{ \texttt{i}\texttt{j}},
\end{displaymath}
hence by \eqref{CMSC}
\begin{align*}
\mathcal{H}^{s-m_1}_E(K' \cap B_E(y,r))& \le \sum_{\texttt{i}\in Z(y,r)} \sum_{\texttt{j} \in I^m, \texttt{i}\texttt{j} \in J'_*} \mbox{diam}_E( Y_{ \texttt{i}\texttt{j}})^{s-m_1}\\
& \le  \sum_{\texttt{i}\in Z(y,r)} C \mbox{diam}_E(Y_\texttt{i})^{s-m_1} \le ZC r^{s-m_1},
\end{align*}
which proves the right-hand side inequality in \eqref{ADrK'}. Let $ \texttt{i} \in J'$ be such that $y=\pi( \texttt{i})$, where $\pi$ is the projection mapping $J' \rightarrow K' $, $\{\pi(\texttt{i}) \}=\cap_{n=1}^\infty Y_{\texttt{i}|_n} $.
Note that \begin{displaymath}
\mbox{diam}_E(Y_{ \texttt{i}|_n}) =  \frac{2\sqrt{m_2}}{4^n}.
\end{displaymath}
Given $0<r<r_0$, let $n$ be such that 
\begin{displaymath}
\frac{2 \sqrt{m_2}}{4^{n}} \le r \le \frac{2 \sqrt{m_2}}{4^{n-1}},
\end{displaymath}
so that $Y_{ \texttt{i}|_n} \subset B_E(y,r)$. Let $\epsilon = \sqrt{m_2}/(C 4^{n(s-m_1)})$. There exists $m>n$ sufficiently large so that
\begin{align*}
\mathcal{H}_E^{s-m_1}(K' \cap B_E(y,r)) \ge \sum_{\texttt{j} \in I^{m-n}, \texttt{i}|_n \texttt{j} \in J'_m} \mbox{diam}_E(Y_{\texttt{i}|_n \texttt{j}})^{s-m_1} - \epsilon,
\end{align*}
noting that the sets $Y_{\texttt{i}|_n \texttt{j}}$ are disjoint.
Using \eqref{CMSC} it follows that
\begin{align*}
\mathcal{H}_E^{s-m_1}(K' \cap B_E(y,r)) \ge \frac{\mbox{diam}_E(Y_\texttt{i}|_n)^{ s-m_1}}{C}-\epsilon = \frac{2\sqrt{m_2}}{C4^{n(s-m_1)}}-\frac{2\sqrt{m_2}}{2C4^{n(s-m_1)}} \ge \frac{r^{s-m_1}}{C_s},
\end{align*}
which completes the proof of \eqref{ADrK'}.

Let $p =[p^1,p^2]\in F_s$ and 
\begin{equation}\label{rbounds}
0<r<\min\left\{r_0, \frac{c^2 C_R}{40(2+5C_R^2)}, \frac{C_R c^2 c_0}{2}\right\},
\end{equation}
where $C_R$ is the constant in \eqref{CR} and $c_0$ is defined below. Let $X_\texttt{i}$ be such that $p \in X_\texttt{i}$ and $\diam_\infty(E)/2^{|i|} \le r < \diam_\infty(E)/2^{|i|-1}$. Then $X_\texttt{i} \subset B_\infty(p,r)$.
For almost every $q =[q^1,q^2] \in X_\texttt{i}$ such that $|q^1-p^1| \le r/40(2+5C_R^2)$, we have the following. By redefining $K'$ as a translation of itself if needed, we can assume that $\pi_2(\pi_1^{-1}(q^1) \cap F_s)= K'+ q^2 \subset \mathbb{R}^{m_2}$. Then we let
\begin{equation*}
L_q=\{ [q^1, q^2+t] \in F_s: t \in K', c_0 r \le |t| \le  r/(2+5C_R^2) \}
\end{equation*}
for some constant $c_0$ such that 
\begin{equation}\label{HpiLq}
 \mathcal{H}^{s-m_1}_E(\pi_2(L_q)) \ge C'_s r^{s-m_1}.
\end{equation}
The existence of $c_0$ follows from the fact that $\pi_2(L_q) = (K'+q^2) \cap B_E(q^2,r/(2+5C_R^2))\setminus B_E(q^2,c_0r)$ and from \eqref{ADrK'}. Let
\begin{equation*}
D_r(p)=F_s \cap B_E(p,r) \cap \left\{ [u^1,u^2]: |u^1-p^1| \le \frac{r}{40(2+5C_R^2)} \right\} \setminus V(p)\left(\frac{c_0 r}{2\sqrt{2}C_R} \right).
\end{equation*}
We claim that 
\begin{equation}\label{LqsubsDr}
L_q \subset D_r(p) \quad \mbox{for almost every} \ q \in  B_\infty(p,r) \cap F_s, |q^1-p^1| \le \frac{r}{40(2+5C_R^2)}.
\end{equation}
Let $q_t=[q^1,q^2+t] \in L_q$. By \eqref{P}, the fact that $q \in X_\texttt{i} \subset B_\infty(p,r)$ and \eqref{rbounds} we have
\begin{align*}
|q^2-p^2| &\le |q^2-p^2-P(p^1,q^1)| + |P(p^1,q^1)| \\
&  \le \frac{d_\infty(p,q)^2}{c^2} + C_R |p^1-q^1|\le \frac{r^2}{c^2} + C_R \frac{r}{40(2+5C_R^2)} \le \frac{ C_R}{20(2+5C_R^2)} r.
\end{align*}
Hence
\begin{align*}
d_E(p, q_t)^2= |p^1-q^1|^2+|p^2-q^2-t|^2 \le \frac{r^2}{40^2(2+5C_R^2)^2} + \left( \frac{C_R}{20(2+5C_R^2)}r+ \frac{r}{2+5C_R^2}\right)^2 \le r^2,
\end{align*} 
which implies that $q_t \in B_E(p,r)$. Since $q_t \in F_s$, it remains to show that $d_E(q_t,V(p)) > c_0r/(2\sqrt{2}C_R)$. As in \eqref{distVp}, we have that
\begin{align*}
d_E(q_t,V(p))=\min_{s^1 \in \R^{m_1}} (|q^1-s^1|^2+|q^2+t-p^2-P(p^1,s^1)|^2)^{1/2}.
\end{align*} 
For every $s^1 \in \R^{m_1}$
\begin{align*}
|t|& \le |q^2+t-p^2-P(p^1,s^1)|+|-q^2+p^2+P(p^1,s^1)|\\
& \le|q^2+t-p^2-P(p^1,s^1)|+|-q^2+p^2+P(p^1,q^1)|+|P(p^1,s^1)-P(p^1,q^1)|\\
& \le |q^2+t-p^2-P(p^1,s^1)|+\frac{d_\infty(p,q)^2}{c^2}+C_R|s^1-q^1|,
\end{align*}
where in the last inequality we used \eqref{P2} as follows:
\begin{displaymath}
|P(p^1,s^1)-P(p^1,q^1)|=|P(p^1,s^1-q^1)| \le C_R |s^1-q^1|.
\end{displaymath}
Hence
\begin{align*}
d_E(q_t,V(p))&\ge \min_{s^1}\frac{1}{\sqrt{2}C_R}(C_R |s^1-q^1| + |q^2+t-p^2+P(p^1,s^1)|)\\
& \ge\frac{1}{\sqrt{2}C_R} \left( |t|-\frac{d_\infty(p,q)^2}{c^2} \right)\ge \frac{1}{\sqrt{2}C_R} \left(c_0r- \frac{r^2}{c^2}\right) >\frac{c_0}{2 \sqrt{2} C_R}r,
\end{align*}
where we used \eqref{rbounds}. This concludes the proof of \eqref{LqsubsDr}.
It follows that for almost every $q \in X_\texttt{i}$ with $|q^1-p^1| \le r/40(2+5C_R^2)$ there exists a point $[q^1, q^2+t] \in D_r(p)$, so $\pi_1(q)=\pi_1[q^1,q^2+t]) \in \pi_1(D_r(p))$. Hence 
\begin{align*}
\mathcal{H}^{m_1}_E(\pi_1(D_r(p))) &\ge \mathcal{H}^{m_1}_E(\pi_1(X_\texttt{i}\cap  \{ u^1 \in \mathbb{R}^{m_1}: |u^1-p^1| \le r/40(2+5C_R^2)\}))\\
& =\mathcal{H}^{m_1}_E( \pi_1( X_\texttt{i}) \cap \{ u^1 \in \mathbb{R}^{m_1}: |u^1-p^1| \le r/40(2+5C_R^2)\}).
\end{align*}
Using the fact that $\pi_1(F_s)$ is $m_1$-Ahlfors regular, we have
\begin{equation*}
 \mathcal{H}^{m_1}_E(\pi_1(D_r(p))) \ge C r^{m_1}
\end{equation*}
for some constant $C >0$. 
Hence by \eqref{HpiLq} and Theorem 7.7 in \cite{M} we have for some constant $c'>0$,
\begin{align*}
\mathcal{H}^s_E \left(F_s \cap B_E(p,r) \setminus V(p) \left( \frac{c_0r}{2\sqrt{2}C_R}\right) \right) &\ge \mathcal{H}^s_E(D_r(p)) \\
& \ge c' \int_{\pi_1(D_r(p))} \mathcal{H}^{s-m_1}_E(\{[0,q^2]: [q^1,q^2] \in D_r(p) \}) d \mathcal{H}^{m_1}_E(q^1) \\
& \ge c' \int_{\pi_1(D_r(p))} \mathcal{H}^{s-m_1}_E(\pi_2(L_q)) d \mathcal{H}^{m_1}_E(q^1) \\
& \ge c' C'_s r^{s-m_1} \mathcal{H}^{m_1}_E(\pi_1(D_r(p))) \ge c_s r^{s-m_1} r^{m_1} = c_s r^s,
\end{align*}
which proves \eqref{claimex3}.

\section{Some Remarks about general Carnot groups}\label{stepk}

We recall here few basic facts about general Carnot groups and show that the analogue of Theorem \ref{ssmall1} always holds. Let $\mathbb{G}$ be a Carnot group of step $k \ge 2$, which means that its Lie algebra $\mathfrak{g}$ admits a stratification of the form
\begin{displaymath}
\mathfrak{g}=V_1\oplus \dots \oplus V_k, \quad [V_1,V_i]=V_{i+1}, \quad V_k \neq \{0\}, \quad V_i=\{0\}, i >k.
\end{displaymath}
Using exponential coordinates, $\mathbb{G}$ can be identified with $\R^n = \R^{m_1} \times \dots \times \R^{m_k}$, where $m_i$ is the dimension of $ V_i$. We denote points by $p=[p^1, \dots, p^k]$ with $p^j \in \R^{m_j}$. The group operation has the form
\begin{align*}
p \cdot q=[p^1+q^1, p^2 + q^2+P_2(p,q), \dots, p^k+q^k+P_k(p, q)],
\end{align*}
where $P_j=(P_{j,1}, \dots, P_{j,m_j})$ and each $P_{j,i}$ is a polynomial of degree $j$ with respect to the dilations
\begin{displaymath}
\delta_\lambda(p)=[\lambda p^1, \lambda^2 p^2, \dots, \lambda^k p^k], \ \lambda>0.
\end{displaymath}
Moreover, for every $p,q \in \mathbb{G}$,
\begin{displaymath}
P_{j,i}(p,0)=P_{j,i}(0,q)=0, \quad P_{j,i}(p,p)=P_{j,i}(p,-p)=0
\end{displaymath}
and
\begin{displaymath}
P_{j,i}(p,q)=P_{j,i}([p^1, \dots, p^{j-1}],[q^1, \dots, q^{j-1}]).
\end{displaymath}

As in the case of step $2$, we consider the following metric, which is bi-Lipschitz equivalent to the Carnot-Carath\'eodory one:
\begin{align*}
d_\infty(p,q)=\max \{ |p^1-q^1|, \epsilon_2 |p^2-q^2+P_2(p,q)|^{1/2}, \dots, \epsilon_k |p^k-q^k+P_k(p,q)|^{1/k}\},
\end{align*}
where $\epsilon_j \in (0,1)$, $j=2, \dots k$, are constants depending only on the group structure (see Theorem 5.1 in \cite{FSSC}). By Proposition 5.15.1 in \cite{BLU} for every $0<R<\infty$ there exists $c_R>0$ such that for every $p,q \in B_E(0,R)$ we have
\begin{equation}\label{compds}
\frac{1}{c_R} d_E(p,q) \le d_\infty(p,q) \le c_R d_E(p,q)^{1/k}.
\end{equation}

For a Carnot group of step $k$ the functions $\beta_\pm$ appearing in the dimension comparison theorem (see Theorem 2.4 in \cite{BTW}) have the form
\begin{equation}\label{betaM}
\beta_-(s)=\begin{cases}
s, & \quad 0<s \le m_1\\
2s-m_1, & \quad m_1 < s \le m_1+m_2\\
\dots\\
ks -(k-1)m_1-\dots - m_{k-1},& \quad m_1 +\dots m_{k-1} < s \le n
\end{cases}
\end{equation}
and
\begin{equation}\label{betaP}
\beta_+(s)=\begin{cases}
k s, & \quad 0<s \le m_k\\
(k-1)s + m_k, & \quad m_k< s \le m_{k-1}+m_k\\
\dots\\
s + m_2+2m_3+ \dots+(k-1)m_s, & \quad m_2+\dots m_k <s \le n.
\end{cases}
\end{equation}
The local Hausdorff measure comparison \eqref{Hm} still holds with these functions $\beta_\pm$.
The horizontal $m_1$-dimensional subspace $V(p)$ passing through a point $p$ is given by those points $q=[q^1, \dots, q^k]  \in \mathbb{G}$ such that
\begin{align}\label{Vps}
q^j= p^j+P_j(p,q), \qquad j=2, \dots, k.
\end{align} 
Note that if $q \in V(p)$, then $d_\infty(p,q)=|p^1-q^1| \le d_E(p,q)$.  

As you can see form \eqref{betaM} and \eqref{betaP}, for general Carnot groups there would be many cases to consider for the dimension comparison (double the step of the group) and we do not know what kind of geometric properties extremal sets should satisfy in each case. It seems natural to believe that also here when $s \le m_1$ sets with positive and finite Euclidean and homogeneous $s$-Hausdorff measures must be in some sense horizontal but we do not know how to prove it. However, we can show that in the case when $0<s<m_k$, which implies $\beta_+(s)=ks$, sets with finite $s$-dimensional Euclidean Hausdorff measure must be in some sense vertical.
Indeed, the following version of Theorem \ref{ssmall1} holds.

\begin{theorem}
Let $0 < s<m_k$ and $A \subset \mathbb{G}$ be such that $\mathcal{H}^s_E(A)<\infty$. Then  for $\mathcal{H}^{k s}_\mathbb{G}$ almost every $p \in A$ there exists $\delta>0$ such that
\begin{equation}
\limsup_{r \rightarrow 0} \frac{\mathcal{H}^s_E (A \cap B_E(p,r) \setminus V(p)(\delta r))}{(2r)^s} > 0.
\end{equation}
\end{theorem}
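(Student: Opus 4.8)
The plan is to mimic exactly the proof of Theorem \ref{ssmall1}, replacing the exponent $2$ in the homogeneous/Euclidean comparison by the step $k$ throughout. First I would reduce, using the local Hausdorff measure comparison \eqref{Hm} (which holds in the step-$k$ setting with the functions $\beta_\pm$ of \eqref{betaM}, \eqref{betaP}, and here $\beta_+(s)=ks$ since $0<s<m_k$), to the situation where $A\subset B_E(0,R)$ is Borel and there is a constant $C>0$ with $\frac1C\mathcal{H}^s_E(B)\le\mathcal{H}^{ks}_\mathbb{G}(B)\le C\mathcal{H}^s_E(B)$ for every $B\subset A$. Then I argue by contradiction: assume there is a Borel $A_0\subset A$ with $\mathcal{H}^s_E(A_0)>0$ such that for every $\delta>0$ the $\limsup$ is $0$ on $A_0$; fix small $\delta,\epsilon>0$ and pass to a Borel subset $A'\subset A_0$ with $\mathcal{H}^s_E(A')>\mathcal{H}^s_E(A_0)/2$ and a radius $r_0>0$ such that $\mathcal{H}^s_E(A\cap B_E(p,r)\setminus V(p)(\delta r))<\epsilon r^s$ for all $p\in A'$, $0<r<r_0$.

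Next I would apply Vitali's covering theorem to a family of balls $B_E(p,r_p)$, $p\in A'$, with $r_p<\eta<\min\{r_0,\delta\}$ chosen (via the upper density theorem \eqref{upd}) so that $r_p^s/3^s\le\mathcal{H}^s_E(A'\cap B_E(p,r_p))\le 3^s r_p^s$, obtaining disjoint balls $B_E(p_i,r_i)$ covering $\mathcal{H}^s_E$-almost all of $A'$, so that $\sum_i r_i^s\le 3^s\mathcal{H}^s_E(A')$. I would then estimate $\mathcal{H}^{ks}_{\mathbb{G},\eta'}(A')$, where $\eta'=c_R(2\eta)^{1/k}$ bounds the $d_\infty$-diameters of the $B_E(p_i,r_i)$ by \eqref{compds}, splitting each $B_E(p_i,r_i)$ into its intersection with $V(p_i)(\delta r_i)$ and its complement. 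For the complement part, \eqref{counterass}-type bound together with the measure comparison gives $\mathcal{H}^{ks}_{\mathbb{G},\eta'}(A'\cap B_E(p_i,r_i)\setminus V(p_i)(\delta r_i))<C\epsilon r_i^s$. For the intersection with $V(p_i)(\delta r_i)$, the key geometric estimate is that its $d_\infty$-diameter is $\lesssim(\delta r_i)^{1/k}$: if $q,q'\in B_E(p_i,r_i)\cap V(p_i)(\delta r_i)$ with $\bar q,\bar q'\in V(p_i)$ the nearest points, then $d_\infty(q,\bar q)\le c_R d_E(q,\bar q)^{1/k}\le c_R(\delta r_i)^{1/k}$ by \eqref{compds}, $d_\infty(\bar q,\bar q')\le d_E(\bar q,p_i)+d_E(p_i,\bar q')\le 4r_i$ since $\bar q,\bar q'\in V(p_i)$, and $r_i^k<\delta r_i$ (because $r_i<\eta<\delta$) gives $r_i\le(\delta r_i)^{1/k}$, so $d_\infty(q,q')\le(2c_R+4)(\delta r_i)^{1/k}$; hence $\mathcal{H}^{ks}_{\mathbb{G},\eta'}(A'\cap B_E(p_i,r_i)\cap V(p_i)(\delta r_i))\le((2c_R+4)(\delta r_i)^{1/k})^{ks}=C''\delta^s r_i^s$.

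Summing over $i$ then yields $\mathcal{H}^{ks}_{\mathbb{G},\eta'}(A')\le(C''\delta^s+C\epsilon)\sum_i r_i^s\le 3^s(C''\delta^s+C\epsilon)\mathcal{H}^s_E(A')\le 3^s\cdot2(C''\delta^s+C\epsilon)\mathcal{H}^s_E(A_0)$; letting $\eta\to0$ (so $\eta'\to0$) gives $0<\mathcal{H}^{ks}_\mathbb{G}(A_0)<2\mathcal{H}^{ks}_\mathbb{G}(A')\le 3^s\cdot2(C''\delta^s+C\epsilon)\mathcal{H}^s_E(A_0)$, and choosing $\delta,\epsilon$ small enough forces the right side below $\mathcal{H}^{ks}_\mathbb{G}(A_0)$, a contradiction. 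The only place where the step-$k$ structure enters nontrivially is the diameter bound $\diam_\infty(B_E(p_i,r_i)\cap V(p_i)(\delta r_i))\lesssim(\delta r_i)^{1/k}$: the exponent $1/k$ in \eqref{compds} is precisely matched by raising to the power $ks$, so the product $\delta^s r_i^s$ comes out with the right powers; this is the main (though still routine) point to check, and everything else is a verbatim adaptation of the proof of Theorem \ref{ssmall1}.
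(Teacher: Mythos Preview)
Your proposal is correct and follows essentially the same approach as the paper: the paper itself says the proof is ``basically the same as in the case of step $2$ groups with few small changes,'' and then lists exactly the changes you identified---replacing the exponent $1/2$ by $1/k$ in the diameter comparison \eqref{compds}, taking $\eta'=c_R(2\eta)^{1/k}$, bounding $\diam_\infty(B_E(p_i,r_i)\cap V(p_i)(\delta r_i))\le 2(c_R+2)(\delta r_i)^{1/k}$ via the same triangle-inequality argument, and using $r_i<\delta$ to get $r_i\le(\delta r_i)^{1/k}$. Your write-up matches the paper's intended argument step for step.
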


The proof is basically the same as in the case of step $2$ groups with few small changes that we explain here. The first change is in equation \eqref{change}, which becomes using \eqref{compds},
\begin{equation*}
\mbox{diam}_\infty(B_E(p_i,r_i))\le c_R \mbox{diam}_E(B_E(p_i,r_i))^{1/k}\le c_R (2\eta)^{1/k}.
\end{equation*}
Then, taking $\eta'= c_R (2\eta)^{1/k}$, \eqref{diamH} becomes
\begin{align}
 \mathcal{H}^{ks}_{\mathbb{G},\eta'}(A'\cap  B_E(p_i,r_i) \cap V(p_i)(\delta r_i)) &\le \mbox{diam}_\infty(B_E(p_i,r_i) \cap V(p_i)(\delta r_i))^{ks} \nonumber \\ & \le (2( c_R+2) (\delta r_i)^{1/k})^{ks}= C'' (\delta r_i)^s. 
\end{align}
This can be proved in the same way as \eqref{diamH} where \eqref{change2} becomes
\begin{equation*}
d_\infty(q,\bar{q}) \le c_Rd_E(q,\bar{q})^{1/k} \le c_R(\delta r_i)^{1/k}, \quad d_\infty(q',\bar{q}') \le c_R d_E(q',\bar{q}')^{1/k} \le c_R(\delta r_i)^{1/k}
\end{equation*} 
and we use the fact that $r_i < \delta$ implies $r_i \le (\delta r_i)^{1/k}$. The rest of the proof is exactly the same as for step $2$ groups.

\vspace{1cm}
\begin{footnotesize}
{\sc Department of Mathematics and Statistics,
P.O. Box 68,  FI-00014 University of Helsinki, Finland,}\\
\emph{E-mail addresses:} 
\verb"laura.venieri@helsinki.fi" 

\end{footnotesize}

\end{document}